\newtheorem{lemma}{Lemma}[section]
\newtheorem{theorem}[lemma]{Theorem}
\newtheorem{remark}[lemma]{Remark}
\begin{document}
\title[]{On the energy of critical solutions\\ of the binormal flow }
\author[V. Banica]{Valeria Banica}
\address[V. Banica]{Sorbonne Universit\'e, CNRS, Universit\'e de Paris, Laboratoire Jacques-Louis Lions (LJLL), B.C. 187, 4 place Jussieu, F-75005 Paris, France, and Institut Universitaire de France (IUF)\\ Valeria.Banica@ljll.math.upmc.fr} 

\author[L. Vega]{Luis Vega}
\address[L. Vega]{Departamento de Matem\'aticas, Universidad del Pais Vasco, Aptdo. 644, 48080 Bilbao, and BCAM, Bilbao, Spain, luis.vega@ehu.es} 
\date\today

\maketitle
\begin{abstract}The binormal flow is a model for the dynamics of a vortex filament in a 3-D inviscid incompressible fluid. The flow is also related with the classical continuous Heisenberg model in ferromagnetism, and the 1-D cubic Schr\"odinger equation. We consider a class of solutions at the critical level of regularity  that generate singularities in finite time. One of our main results is to prove the existence of a natural energy associated to these solutions. This energy remains constant except at the time of the formation of the singularity when it has a jump discontinuity. 
When interpreting this conservation law in the framework of fluid mechanics, it involves the amplitude of the Fourier modes of the variation of the direction of the vorticity.\end{abstract}
\section{Introduction}
In this paper we focus on qualitative and quantitative properties of singular solutions of the binormal flow. This geometric flow  describes the evolution in time of a curve $\chi (t,x)$  in $\mathbb R^3$ that is parametrized by arclength $x$, via the equation
\begin{equation}\label{BF}
\chi_t=\chi_x\wedge \chi_{xx}.
\end{equation} 
If in a 3-D fluid the vorticity is  concentrated initially along a curve, it is expected that at least in some situations the vorticity at later times is still concentrated along another curve, whose evolution is dictated by the binormal flow. This was formally derived  by Da Rios in \cite{DaR} after truncating the integral given by Biot-Savart's law (see also \cite{MuTaUkFu}, \cite{ArHa},\cite{CaTi}). A  more rigorous argument, but still under some strong assumptions, has been recently given by Jerrard and Seis in \cite{JeSe}.  \\

The binormal flow is linked to the 1-D cubic Schr\"odinger equation (NLS) in the following way. Taking the derivative in $x$ of $\chi$ we obtain that the tangent vector $T(t,x)\in\mathbb S^2$ satisfies the classical continuous Heisenberg model used in ferromagnetism
\begin{equation}\label{SM}
T_t=T\wedge T_{xx}.
\end{equation} 
Next, by considering the curvature and torsion of  $\chi(t,x)$,
Hasimoto constructed, in the spirit of the Madelung transform, a complex valued function that satisfies the focusing 1-D cubic NLS (\cite{Ha}). Conversely, given a real function of time $a(t)$, a solution $u$ of
\begin{equation}\label{NLS}
iu_t+u_{xx}+\frac 12(|u|^2-a(t))u=0,
\end{equation} 
a point $P\in\mathbb R^3$, and an $\mathbb R^3$-orthonormal basis $(v_1, v_2, v_3)$, one can construct a solution of \eqref{BF} as follows. First define parallel frames $(T,e_1,e_2)(t,x)$ as the solutions of
\begin{equation}\label{TNtx}\left\{
\begin{split}T_x =\Re(\overline u\, N),&\quad
N_x =-u\, T,\\
T_t =\Im (\overline{u_x}\,N), &\quad
N_t =-iu_x T+\frac i2(|u|^2-a(t)) N,
\end{split}
\right.
\end{equation}
with $N=e_1+ie_2$ and initial data $(T,e_1,e_2)(t_0,x_0)=(v_1,v_2,v_3)$. It follows that $T$ constructed this way satisfies the Schr\"odinger map \eqref{SM}. 
Finally, setting 
$$\chi(t,x)=P+\int_{t_0}^t(T\wedge T_{x})(\tau,x_0)d\tau+\int_{x_0}^xT(t,s)ds,$$
we obtain that $\chi(t,x)$ satisfies the binormal flow \eqref{BF}. Note that the construction  of $\chi(t)$ is not obvious if the solution $u$ of \eqref{NLS} is not too regular. This is precisely the scenario considered in this paper.\\

Regarding \eqref{NLS} note that since $a(t)$ is real, the corresponding term can be easily removed from the equation by a change of function. From the gauge invariance in \eqref{TNtx} this will lead to the construction of the same curve. In this way we obtain the cubic NLS 
\begin{equation}\label{NLScubic}
iu_t+u_{xx}+\frac 12|u|^2u=0,
\end{equation} 
that is invariant under the scaling 
\begin{equation}\label{scaling}
u_\lambda(t,x)=\lambda u(\lambda^2 t,\lambda x).
\end{equation}
At this respect we shall say that the solutions of \eqref{BF} are critical if they are constructed from NLS solutions in a functional setting that is invariant by scaling.

Let us recall here that \eqref{NLScubic} is well-posed in $H^{s}$, for any $s\geq 0$ (\cite{GiVe},\cite{CaWe}), and that for $s<0$ the Cauchy problem is ill-posed (\cite{KPV},\cite{ChCoTa},\cite{CaKa},\cite{Ki},\cite{Oh},\cite{KiViZh},\cite{KoTa}). We recall also that well-posedness holds for data with Fourier transform in $L^p$ spaces, $p<+\infty$ (\cite{VaVe},\cite{Gr},\cite{Ch}).

It is well known that equation \eqref{NLScubic} is also invariant under Galilean transformations 
\begin{equation}\label{gal}
u_{\eta}(t,x)=e^{-i\eta^2 t+i\eta x}u(t,x-2\eta t).
\end{equation}
One of the problems with the Sobolev class is that it is not invariant under translation in Fourier space, except of course $L^2$ that is not invariant under \eqref{scaling}. As a consequence the Sobolev class is not well suited with respect to Galilean transformations. This is the reason why in our previous work \cite{BV5}  we consider initial data whose Fourier transforms are $L^2$ periodic, possibly smooth, functions. Another possibility is to measure the Fourier transform in the $L^\infty$ norm because this topology is critical for cubic NLS with respect to both symmetries \eqref{scaling} and \eqref{gal}. One of the issues that we address in this paper is the possible growth in this latter topology.\\

The binormal flow is known to develop singularities in finite time. An important class of singular solutions is the family of self-similar solutions $\{\chi_\alpha\}_{\alpha>0}$, that are determined for $t>0$ by the values of their curvature and torsion, $\frac \alpha{\sqrt{t}}$ and $\frac{x}{2t}$ respectively. The curve $\chi_\alpha(t)$ is smooth for $t>0$ and, as proved in \cite{GRV}, it has a trace at $t=0$ given by a polygonal line with just one corner of angle $\theta$, such that
\begin{equation}\label{angle}
\sin\frac\theta 2=e^{-\pi\frac{\alpha^2}{2}}.
\end{equation} 
The corresponding 1-D cubic NLS solution is 
$u_\alpha(t,x)=\alpha\frac{e^{i\frac{x^2}{4t}}}{\sqrt{t}}$, taking $a(t)=\frac {\alpha^2}t$ in \eqref{NLS}.\\

Recently, we constructed in \cite{BV5} a class of smooth solutions of the binormal flow that generate several corners in finite time. More precisely, take a polygonal line with corners located at $x=j\in\mathbb Z$ and angles $\theta_j$, and choose  $\{\alpha_j\}$ using the relation \eqref{angle}. Then, under the assumption  that some moments of the sequence  $\{\alpha_j\}$ are squared integrable,  we construct a strong smooth solution of the binormal flow for $t\neq 0$, that is a weak solution for all $t$. This solution has  the given polygonal line as trace at $t=0$. For this purpose we first construct for $t\neq0$ and $a(t)=\frac{\sum_j|\alpha_j|^2}{t}:=\frac Mt$ a unique solution of \eqref{NLS} of the form
\begin{equation}\label{ansatz}
u(t,x)=\sum_je^{-i(|\alpha_j|^2-M)\log \sqrt{t}}\tilde A_j(t)\frac{e^{i\frac{(x-j)^2}{4t}}}{\sqrt{t}}:=\sum_j A_j(t)\frac{e^{i\frac{(x-j)^2}{4t}}}{\sqrt{t}},
\end{equation}
such that $\underset{t\rightarrow 0}{\lim}\tilde A_j(t)=\alpha_j,$ and $R_j(t):=\tilde A_j(t)-\alpha_j$ satisfies
 \begin{equation}\label{decayansatzcubic}
 \sup_{0<t<1}t^{-\gamma}\|\{R_j(t)\}\|_{l^{2,s}}+t\,\|\{\partial_t R_j(t)\}\|_{l^{2,s}}<C(\{\alpha_j\}),\end{equation}
for $0<\gamma<1$ (see also \cite{Kita} for the subcubic case). Here $s\geq 3$, $\| (\beta_j)\|_{l^{2,s}}:=(\sum_j (1+|j|)^{2s}\, |\beta_j|^2)^{1/2}$, and the coefficients 
$$e^{-i|\alpha_j|^2\log \sqrt{t}}\tilde A_j(t)$$ 
solve the non-autonomous Hamiltonian system:
\begin{equation}\label{system}
i\partial_t A_k(t)=\frac{1}{ 4\pi t}\sum_{k-j_1+j_2-j_3=0}e^{-i\frac{k^2-j_1^2+j_2^2-j_3^3}{4t}}A_{j_1}(t)\overline{A_{j_2}(t)}A_{j_3}(t)-\frac{\sum_j|\alpha_j|^2}{2\pi t}A_k(t).
\end{equation}\smallskip
Moreover, the solution satisfies the mass conservation law:
\begin{equation}\label{consl2}
M=\sum_j|\alpha_j|^2=\sum_j|A_j(t)|^2.
\end{equation}
Then, given this unique solution  of \eqref{NLS}  we construct the solution of the binormal flow as explained above. This solution has as initial data the given polygonal line. We refer the reader to Theorem 1.1 and Theorem  1.4  in \cite{BV5} for the precise statements.\\

Our main result in this paper is to see if there are quantities as \eqref{consl2} associated to \eqref{BF} and \eqref{SM} that are also conserved. Recall that for smooth solutions of \eqref{SM} the energy density is given by
$$c^2\,dx= |T_x|^2\,dx,$$
where $c$ stands for the curvature. As a consequence, those  solutions of  \eqref{SM} that are constructed from solutions of \eqref{NLS} which have finite $L^2$ norm will have energy that is also finite. But this is not the case for the solutions considered in this article.

It turns out that the right way of interpreting \eqref{consl2} is to look at the Fourier transform in space of  $T_x$. Then, the energy appears as a scattering energy that is preserved as long as $t\neq0$, while it has a jump at $t=0$. More concretely, we have the following result.

\begin{theorem}\label{th} Let $\chi$ be a binormal flow solution with initial data a polygonal line, as introduced above, and $T$ its tangent vector. 
We define
\begin{equation}\label{energydef}
\Xi(T(t)):=\underset{k\rightarrow\infty}{\lim}\int_k^{k+1}|\widehat{T_x}(t,\xi)|^2d\xi.
\end{equation}
For $t>0$ we have the following conservation law:
\begin{equation}\label{cons}
\Xi(T(t))=4\pi\sum_j|\alpha_j|^2.
\end{equation}
At $t=0$ when singularities are created for the binormal flow solution $\chi$ we have
\begin{equation}\label{energyt0}
\int_k^{k+1}|\widehat{T_x}(0,\xi)|^2d\xi=4\sum_j(1-e^{-\pi |\alpha_j|^2})\quad \forall k\in\mathbb Z.
\end{equation}
Therefore there is a jump discontinuity of $\Xi(T(t))$ at time $t=0$, showing an instantaneous growth for positive times at large frequencies:
\begin{equation}\label{energy0}
\Xi(T(0))=4\sum_j(1-e^{-\pi |\alpha_j|^2})<4\pi\sum_j|\alpha_j|^2=\Xi(T(t)).
\end{equation}

\end{theorem}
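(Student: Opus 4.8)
The plan is to combine the identity $T_x=\Re(\overline u\,N)$ from \eqref{TNtx}, in which $u$ is the explicit superposition of chirps \eqref{ansatz} and $(T,e_1,e_2)$ is the parallel frame with $N=e_1+ie_2$, with the behaviour of $N(t,\cdot)$ at spatial infinity. I normalize $\widehat f(\xi)=\int_{\mathbb R}e^{-2\pi i x\xi}f(x)\,dx$: since the corners sit at the integers, the ``amplitude'' functions that appear will be $1$-periodic in $\xi$, so that integrating $|\widehat{T_x}|^2$ over a unit frequency interval is exactly one Parseval sum (this is also why \eqref{energyt0} can hold for \emph{every} $k$). With this convention, completing the square in each chirp gives the explicit formula $\widehat u(t,\xi)=2\sqrt\pi\,e^{i\pi/4}e^{-4\pi^2 i t\xi^2}\,\mathbf B(t,\xi)$ with $\mathbf B(t,\xi):=\sum_j A_j(t)e^{-2\pi i j\xi}$, which is the key explicit input.

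\emph{Step 1 (the frame at spatial infinity).} First I would show that for each fixed $t>0$ the limits $N^\pm_\infty(t):=\lim_{x\to\pm\infty}N(t,x)$ exist with a quantitative rate. This follows by integrating $N_x=-uT$ from $\pm\infty$: each chirp has phase $(x-j)^2/(4t)$ whose $x$-derivative is bounded below by a positive constant for $|x|$ large, so repeated integration by parts — bounding $T,T_x,T_{xx}$ away from $t=0$ by re-differentiating \eqref{TNtx}, and summing over $j$ via the $l^{2,s}$ decay of $\{A_j(t)\}$ and $\{\partial_tA_j(t)\}$ from \eqref{decayansatzcubic} — yields $N(t,x)-N^+_\infty(t)=\tfrac{2it}{x}\,u(t,x)T(t,x)+O_t(x^{-2})$. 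Multiplying by $\overline u$ and taking real parts kills the $x^{-1}$ term (because $T$ is real, $\Re(iT)=0$), so that
\[
T_x(t,x)=\Re\!\big(\overline{u(t,x)}\,N^+_\infty(t)\big)+O_t(x^{-2}),\qquad x\to+\infty,
\]
and symmetrically with $N^-_\infty(t)$ as $x\to-\infty$. Hence $T_x=P+R$ with $R\in L^2(\mathbb R)$ and $P=\Re(\overline u\,N^+_\infty)\mathbf 1_{\{x>L\}}+\Re(\overline u\,N^-_\infty)\mathbf 1_{\{x<-L\}}$.

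\emph{Step 2 (the Fourier side and \eqref{cons}).} By Plancherel $\int_k^{k+1}|\widehat R|^2\to0$. For $P$, write $\Re(\overline u\,v)=\tfrac12(\overline u v+u\overline v)$; each of $\widehat{u\,\mathbf 1_{\{\pm x>L\}}}$ and $\widehat{\overline u\,\mathbf 1_{\{\pm x>L\}}}$ is, after the shift $w=x-j$ and completion of the square, an incomplete Fresnel integral $\int_{a(j,\xi)}^{\pm\infty}e^{\pm iw^2/(4t)}\,dw$, which is bounded in $\xi$ and, as $\xi\to+\infty$, either tends to $0$ or to the corresponding piece of $\widehat u$; tracking which is which (the positive-frequency stationary point of $u\overline N$ lies at $x\to+\infty$, that of $\overline u N$ at $x\to-\infty$) gives, as $\xi\to+\infty$,
\[
\widehat{T_x}(t,\xi)=\sqrt\pi\Big(e^{i\pi/4}e^{-4\pi^2 i t\xi^2}\mathbf B(t,\xi)\,\overline{N^+_\infty(t)}+e^{-i\pi/4}e^{4\pi^2 i t\xi^2}\overline{\mathbf B(t,-\xi)}\,N^-_\infty(t)\Big)+o(1).
\]
Since $|N^\pm_\infty|^2=|e_1|^2+|e_2|^2=2$, taking the squared modulus gives
\[
|\widehat{T_x}(t,\xi)|^2=2\pi\big(|\mathbf B(t,\xi)|^2+|\mathbf B(t,-\xi)|^2\big)+2\pi\,\Re\!\big(i\,e^{-8\pi^2 i t\xi^2}\mathbf B(t,\xi)\mathbf B(t,-\xi)\langle\overline{N^+_\infty},N^-_\infty\rangle\big)+o(1).
\]
Over $[k,k+1]$ the cross term pairs a genuinely oscillating Gaussian with a $1$-periodic function, hence is $O(1/k)$ by non‑stationary phase; the $o(1)$ contributes $o(1)$ after Cauchy--Schwarz against the bounded main term; and $1$-periodicity together with the mass law \eqref{consl2} gives $\int_k^{k+1}|\mathbf B(t,\cdot)|^2=\int_0^1|\mathbf B(t,\cdot)|^2=\sum_j|A_j(t)|^2=\sum_j|\alpha_j|^2$, and the same for $\mathbf B(t,-\cdot)$. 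Letting $k\to\infty$ yields $\Xi(T(t))=2\pi\big(\sum_j|\alpha_j|^2+\sum_j|\alpha_j|^2\big)=4\pi\sum_j|\alpha_j|^2$, which is \eqref{cons}.

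\emph{Step 3 ($t=0$, the jump, and the main obstacle).} At $t=0$ the curve is the polygonal line, so $T(0,\cdot)$ is the piecewise constant field equal to the $j$-th edge direction $v_j$ on $(j,j+1)$; thus $T_x(0,\cdot)=\sum_j(v_j-v_{j-1})\delta_j$ and $\widehat{T_x}(0,\xi)=\sum_j(v_j-v_{j-1})e^{-2\pi i j\xi}$ is $1$-periodic. Parseval on $[0,1]$ together with the corner relation $|v_j-v_{j-1}|^2=2+2\cos\theta_j=4\big(1-\sin^2\tfrac{\theta_j}{2}\big)=4(1-e^{-\pi|\alpha_j|^2})$ coming from \eqref{angle} gives $\int_k^{k+1}|\widehat{T_x}(0,\xi)|^2d\xi=\sum_j|v_j-v_{j-1}|^2=4\sum_j(1-e^{-\pi|\alpha_j|^2})$ for every $k\in\mathbb Z$, i.e.\ \eqref{energyt0}; and since $1-e^{-\pi|\alpha_j|^2}<\pi|\alpha_j|^2$ term by term, \eqref{energy0} follows. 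The hard part is Step 1: the expansion of $N$ at $x\to\pm\infty$ must be quantitative and uniform enough that $R$ is genuinely square integrable and that the $o(1)$ in Step 2 holds uniformly on unit frequency windows (so $\int_k^{k+1}|o(1)|^2\to0$); because $u$ is an infinite sum of chirps with distinct phase gradients, this requires splicing the stationary/non‑stationary phase estimates for the individual chirps with the $l^{2,s}$ control on $\{A_j(t)\}$ from \eqref{decayansatzcubic} and with bounds on the first two $x$-derivatives of the parallel frame away from $t=0$.
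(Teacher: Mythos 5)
Your overall strategy is the same as the paper's: derive spatial asymptotics for the parallel frame as $x\to\pm\infty$, split $\widehat{T_x}$ into a main term (coming from the limiting frame vectors) plus a remainder, compute the main term's unit-interval $L^2$ mass via Plancherel and the mass law \eqref{consl2}, and for $t=0$ read off the Dirac comb from the polygonal structure and use \eqref{angle}. Step~3 is essentially the paper's argument verbatim. The decomposition idea in Steps 1--2 is also the paper's, implemented via the decomposition $\widehat{T_x}=I+J$ of \eqref{decT} and Lemmas~\ref{lemma_low}--\ref{lemma_princ}.

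However, Step~1 contains a genuine error: the limit $N^{\pm}_\infty(t):=\lim_{x\to\pm\infty}N(t,x)$ \emph{does not exist}. What converges is the modulated normal $N_M(t,x)=e^{iM\log(|x|/\sqrt t)}N(t,x)$ with $M=\sum_j|\alpha_j|^2$; this is exactly the content and raison d'\^etre of Lemma~\ref{lemmalimN}. Where does the divergence come from? Integrating $N_x=-uT$ by parts once via the quadratic phase produces $T_x=\Re(\overline u N)=\tfrac12(\overline u N+u\overline N)$; pairing the $\overline u N$ part with the outer $u$ gives a double sum $\sum_{j,k}A_j\overline{A_k}\frac{e^{ix(k-j)/(2t)}}{x}N+\cdots$, and the diagonal $j=k$ contributes a \emph{non-oscillatory} term $\propto i\frac{M}{x}N$, which integrates to a rotation $\propto iM\log|x|\cdot N$ that never settles down. (For the tangent vector this same diagonal contribution is killed by taking $\Re(\cdot)$, since $T$ is real --- this is why Lemma~\ref{lemmalimT} proves convergence for $T$ without any modulation. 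The two cases are not symmetric.) Your claimed expansion $N(t,x)-N^+_\infty(t)=\tfrac{2it}{x}uT+O_t(x^{-2})$ captures the boundary term of the integration by parts but misses this diagonal $\log$-divergence entirely; the conclusion that the remainder $R$ is $O_t(x^{-2})$ and hence the decomposition $T_x=P+R$ with $P=\Re(\overline u\,N^\pm_\infty)\mathbf 1_{\{\pm x>L\}}$ does not hold as written.

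This propagates to Step~2: the correct main term carries extra $\xi$- and $j$-dependent logarithmic phases, as in the paper's $I^1,I^2$ where the summand is $e^{2\pi ij\xi}e^{\mp iM\log\frac{\langle j\pm 4\pi t\xi\rangle}{\sqrt t}}\overline{A_j(t)}$, not just $e^{2\pi ij\xi}\overline{A_j(t)}=\overline{\mathbf B(t,\xi)}$-type terms. The final number \eqref{cons} happens to come out the same because these phases vary slowly in $\xi$ (the $\xi$-derivative is $O(\xi^{-1})$), so the Plancherel computation over a unit window is unaffected in the limit $k\to\infty$, and the rate $O(1/\langle x\rangle)$ (not $O(x^{-2})$, but still square integrable) that one actually gets for $g_N^\pm=N-e^{-iM\log(\langle x\rangle/\sqrt t)}N^{\pm\infty}$ is enough to make the remainder's unit-window $L^2$ mass vanish. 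So the theorem is recoverable along your lines --- but only after replacing ``$N$ has a limit'' with ``the \emph{modulated} normal $N_M$ has a limit,'' which is the pivotal point of Lemma~\ref{lemmalimN} and not a technical refinement.
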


The proof of the theorem is based on a careful decomposition of $\widehat{T_x}(t,\xi)$ in principal terms that eventually  give $\Xi(T(t))$ and terms for which we get either a constant type upper-bound or a logarithmic type upper-bound depending on $d(4\pi \xi,\frac{\mathbb Z}{t})$, and that become negligible in the computation of $\Xi(T(t))$.

\begin{remark}
Observe that on the one hand that the quantity $\Xi(T(t))$ involves $\hat T_x(t,\xi)$ for large $\xi$, and therefore it measures the size of the amplitude of the large frequency waves of the variation of $T$. On the other hand $T$, when interpreted at  the level of fluid mechanics,  gives the direction of the vorticity. At this respect Constantin-Fefferman-Majda's criterion \cite {CFM} states that the growth in the variation of the direction of the vorticity is necessary to produce singularities in Euler equations in three dimensions. 
\end{remark}

\begin{remark}\label{Nrem}
A similar statement holds for the normal vector, namely for $t>0$
\begin{equation}\label{consN}
\Xi(N(t)):=\underset{k\rightarrow\infty}{\lim}\int_k^{k+1}|\widehat{N_x}(t,\xi)|^2d\xi=4\pi\sum_j|\alpha_j|^2,
\end{equation}
but
\begin{equation}\label{energyN0}
\Xi(\tilde N(0))=4\sum_j(1-e^{-\pi |\alpha_j|^2}),
\end{equation}
where $\tilde N(0,x)$ is the limit at $t=0$ of \footnote{the existence of $\tilde N(0,x)$ is proved in Lemmas 4.5 in \cite{BV5}.}
$$\tilde N(t,x)=e^{i\sum_{r\in\mathbb Z, r\neq x}|\alpha_r|^2\log\frac{|x-r|}{\sqrt{t}}}N(t,x).$$ 

\end{remark}

\begin{remark}
Theorem \ref{th} applies in particular to the case of self-similar solutions of the binormal flow that are generated by polygonal lines with only one corner. Moreover,  using a perturbation argument, we constructed in \cite{BV4} solutions of the binormal flow that are smooth except at one time when they generate a corner.  For these perturbed solutions we managed to show in \cite{BV4note} that
$$\underset{\xi\rightarrow \infty}{\lim}|\widehat{T_x}(t,\xi)|^2=4\pi|\alpha_0|^2,$$
and that there exists $\epsilon>0$, depending on the perturbation of the initial data with respect to the self-similar case, such that for any $\xi\in\mathbb R$
$$|\widehat{T_x}(0,\xi)|^2<4(1-e^{-\pi |\alpha_0|^2})+\epsilon.$$
In particular for small perturbations we obtain for any $t>0$
$$\Xi(T(0))<4\pi|\alpha_0|^2=\Xi(T(t)).$$
A similar statement holds for the normal vector $N(t)$.
\end{remark}

Our final result is an observation that uses Theorem \ref{th} to reinforce the conjecture done in \cite{DHV1} about the evolution of a regular planar polygon according to the binormal flow (see also \cite{GrDe}, \cite{JeSm2}, \cite{DHKV}). In that paper, and after some theoretical arguments, it is conjectured  that the evolution of a regular polygon is periodic in time, and that at rational multiples of the time period the curve is a skew polygon with the same angle between consecutive sides. In \cite{DHV1} the size of this angle is guessed from the data obtained in the numerical simulations, while in this paper we obtain it from the energy $\Xi(T(t))$.

The paper is organized as follows. In the next section we prove the asymptotic behavior in space of the tangent and modulated normal vectors, and see that this behavior is independent of time. This information allows us to prove Theorem \ref{th} in \S\ref{sectth}. Finally, in the last section we make the observation about planar regular polygons mentioned above.

\bigskip

\section{Asymptotic behavior in space of the orthonormal frame}\label{frameas} 

\begin{lemma}\label{lemmalimT}
There exist  $T^{\pm \infty}$ with $|T^{\pm \infty}|=1$ such that for all $t>0$
\begin{equation}\label{limT}T^{\pm
 \infty}=\underset{x\rightarrow\pm\infty}{\lim}T(t,x).\end{equation}
Moreover,
$$|T(t,x)-T^{\pm \infty}|\leq\frac{C(t,\{\alpha_j\})}{\langle x\rangle}, \forall x\in\mathbb R^*, \pm x>0,$$
where $\langle x\rangle =1+|x|$.
\end{lemma}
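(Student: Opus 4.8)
The plan is to fix $t>0$ — where $x\mapsto T(t,x)$ is smooth — and to exploit the oscillatory Fresnel factor hidden in $u$. First recall that the frame $(T,e_1,e_2)(t,\cdot)$ solves \eqref{TNtx} in $x$ with an antisymmetric coefficient matrix, hence it stays orthonormal; in particular $|T(t,x)|=1$ and $|N(t,x)|^2=2$ for all $x$, and $|T_x|=|u|$. Moreover, expanding $(x-j)^2=x^2-2jx+j^2$ and grouping the common Fresnel phase,
\[
u(t,x)=\frac{e^{ix^2/4t}}{\sqrt t}\,f(t,x),\qquad f(t,x):=\sum_j A_j(t)\,e^{-ijx/2t}\,e^{ij^2/4t}.
\]
Since $\{A_j(t)\}\in l^{2,s}$ with $s\ge 3$ (from \eqref{ansatz}, \eqref{decayansatzcubic}), Cauchy--Schwarz yields $\sum_j(1+|j|)^2|A_j(t)|<\infty$, so $f,\partial_xf,\partial_x^2 f$ are bounded in $x$ by a constant $C(t,\{\alpha_j\})$ (with inverse powers of $t$). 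It is enough to treat $x\to+\infty$: the case $x\to-\infty$ is symmetric, and for bounded $x$ the bound $|T(t,x)-T^{+\infty}|\le C\langle x\rangle^{-1}$ follows trivially from $|T|\equiv1$ and continuity.

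Key algebraic step. Put $M(t,x):=e^{-ix^2/4t}\,N(t,x)$, so that $\sqrt t\,T_x=\Re(\overline u N)=\Re(\overline f\,M)$, and, using $N_x=-uT$,
\[
\partial_xM=-\frac{f}{\sqrt t}\,T-\frac{ix}{2t}\,M ,
\]
which for $x\neq 0$ rearranges to $M=\frac{2it}{x}\,\partial_xM+\frac{2i\sqrt t}{x}\,fT$. Substituting this,
\[
\sqrt t\,T_x=\Re\!\Big(\frac{2it}{x}\,\overline f\,\partial_xM\Big)+\Re\!\Big(\frac{2i\sqrt t}{x}\,|f|^2\,T\Big),
\]
and the crucial observation is that the second term is identically zero, since $\frac{2i\sqrt t}{x}|f|^2$ is purely imaginary and $T$ is a real vector. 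Hence $T_x=-\tfrac{2\sqrt t}{x}\,\Im\big(\overline f\,\partial_xM\big)$.

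Integration and estimates. Fix $1\le x_0<x_1$ and integrate the last identity from $x_0$ to $x_1$. Writing $\Im(\overline f\,\partial_xM)=\partial_x\Im(\overline f M)-\Im\big((\partial_x\overline f)M\big)$, the contribution of the first piece, after one integration by parts against $1/x$, is a boundary term $O(1/x_0)$ plus $\int x^{-2}\Im(\overline f M)\,dx=O(1/x_0)$ (as $|\overline f M|$ is bounded). For the second piece we reinstate $M=e^{-ix^2/4t}N$ and write $e^{-ix^2/4t}=\frac{2it}{x}\,\partial_x(e^{-ix^2/4t})$, integrating by parts once more: the boundary term is $O(1/x_0^2)$, and $\partial_x\big(x^{-2}(\partial_x\overline f)N\big)$ splits into three terms each of size $O(x^{-2})$ — for the one containing $\partial_xN$ one uses $|\partial_xN|=|u|\le C(t)$, for the other two the bounds on $\partial_x\overline f,\partial_x^2\overline f$. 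Altogether $\int_{x_0}^{x_1}x^{-1}\Im(\overline f\,\partial_xM)\,dx=O(1/x_0)$ uniformly in $x_1$, so $x\mapsto T(t,x)$ is Cauchy as $x\to+\infty$; its limit $T^{+\infty}$ exists, satisfies $|T^{+\infty}|=\lim_{x\to\infty}|T(t,x)|=1$, and letting $x_1\to\infty$ gives $|T(t,x)-T^{+\infty}|\le C(t,\{\alpha_j\})\langle x\rangle^{-1}$.

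Main obstacle. The only genuinely delicate point is the term $\int x^{-1}\Im\big((\partial_x\overline f)M\big)\,dx$: it is merely conditionally convergent and the naive bound loses a logarithm, so the extra Fresnel oscillation carried by $M$ is essential (and one must keep $x$ away from $0$, which is harmless since only $x\to\pm\infty$ is at stake). Conceptually, it is the vanishing of $\Re\big(\tfrac{2i\sqrt t}{x}|f|^2T\big)$ that upgrades mere boundedness of $T$ to convergence with rate $\langle x\rangle^{-1}$; the normal vector does not benefit from such a cancellation, which is precisely why it requires the logarithmic modulation appearing in Remark~\ref{Nrem}.
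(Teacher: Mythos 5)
Your argument for the \emph{spatial} asymptotics at a fixed time $t>0$ is correct, and the packaging is genuinely slicker than the paper's. The paper integrates by parts on $\Re\int\overline u N$ directly and, two steps in, has to observe that the diagonal $j=k$ terms of a double sum $\sum_{j,k}\overline{A_j}A_k\,e^{ix(j-k)/2t}T/x$ vanish under $\Im$ because $|A_j|^2$ is real; the off-diagonal terms are then killed by a further integration by parts against the linear phase, followed by yet another one against the quadratic phase. Your version of the same cancellation happens earlier and all at once: by writing $M=e^{-ix^2/4t}N$ and using the algebraic identity $M=\tfrac{2it}{x}\partial_x M+\tfrac{2i\sqrt t}{x}fT$, the entire $|f|^2T/x$ term disappears from $T_x$ because $\Re(i\cdot\text{real})=0$, and the remaining term $-\tfrac{2\sqrt t}{x}\Im(\overline f\,\partial_x M)$ splits cleanly into a total derivative and a term carrying the Fresnel phase of $M$, each giving $O(1/x_0)$ after one integration by parts. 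This avoids the diagonal/off-diagonal bookkeeping altogether, and the constant you produce is comparable to the paper's \eqref{constT} (it involves $\sum|A_j|$, $\sum|j||A_j|$, $\sum j^2|A_j|$ and inverse powers of $t$).

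However, there is a genuine gap: the lemma claims that \emph{one and the same} pair of vectors $T^{\pm\infty}$ works for \emph{all} $t>0$ (the quantifier is ``there exist $T^{\pm\infty}$ such that for all $t>0$\,\dots''), and your argument establishes only the existence of $t$-dependent limits $T^{\pm\infty}(t)$. The paper devotes the entire second half of its proof to the $t$-independence: it fixes $0<t_1<t_2$ and shows $T(t_2,x)-T(t_1,x)=O(1/x)$ by integrating the time-evolution law $T_t=\Im(\overline{u_x}N)$ and performing a chain of integrations by parts in $t$ and $x$, exploiting the quadratic oscillatory factor $e^{-ix^2/4t}$ in time (which buys $1/x^2$ per integration) and the cancellation at $j=k$ in the resulting double sum. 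None of this appears in your proposal, and it does not follow from the spatial estimate alone: knowing that $T(t,\cdot)$ has a limit at $+\infty$ for each $t$, with a $t$-dependent constant in the rate, says nothing about whether that limit moves as $t$ varies. You would need to add a separate argument, along the lines of the paper's, to close this gap.
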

\begin{proof} We shall first prove that for fixed $t>0$ there exists a unit vector $T^{\infty}(t)$ which is the limit of $T(t,x)$ as $x$ goes to $\infty$ ; the asymptotic behavior at $-\infty$ can be treated in the same way. 

As $T_x=\Re(\overline{u} N)$ we get for $0<x_1<x_2$:
$$T(t,x_2)-T(t,x_1)=\Re \int_{x_1}^{x_2}\sum_j\overline{A_j(t)}\frac{e^{-i\frac{(x-j)^2}{4t}}}{\sqrt{t}} N(t,x) dx.$$
We perform an integration by parts using the quadratic oscillatory phase to get $\frac 1x$ decay in space:
$$T(t,x_2)-T(t,x_1)=\left[\Re \sum_j\overline{A_j(t)}e^{-i\frac{x^2}{4t}}\frac{4t}{-i2x}\frac{e^{i\frac{xj}{2t}-i\frac{j^2}{4t}}}{\sqrt{t}} N(t,x) \right]_{x_1}^{x_2}$$
$$-\Im 2\sqrt{t} \sum_j\overline{A_j(t)}e^{-i\frac{j^2}{4t}}\int_{x_1}^{x_2} e^{-i\frac{x^2}{4t}}\left(\frac{e^{i\frac{xj}{2t}}}{x} N(t,x)\right)_x dx.$$
Since $N_x=-uT$,
$$\left|T(t,x_2)-T(t,x_1)-\Im \frac{i}{\sqrt{t}} \sum_j j\overline{A_j(t)} e^{-i\frac{j^2}{4t}}\int_{x_1}^{x_2} e^{-i\frac{x^2}{4t}}\frac{e^{i\frac{xj}{2t}}}{x} N(t,x) dx\right.$$
$$\left.-\Im 2\sqrt{t} \sum_j\overline{A_j(t)} e^{-i\frac{j^2}{4t}}\int_{x_1}^{x_2} e^{-i\frac{x^2}{4t}}\frac{e^{i\frac{xj}{2t}}}{x}\sum_kA_k(t) \frac{e^{i\frac{(x-k)^2}{4t}}}{\sqrt{t}} T(t,x) dx\right|\leq C\frac{\sqrt{t}\|\{A_j(t)\}\|_{l^1}}{x_1}.$$
In the first integral we perform again an integration by parts using the quadratic phase to obtain integrability in space:
$$\left|T(t,x_2)-T(t,x_1)-2\Im \sum_{j\neq k}\overline{A_j(t)}A_k(t) e^{i\frac{k^2-j^2}{4t}}\int_{x_1}^{x_2} e^{i\frac{x(j-k)}{2t}}\frac{T(t,x)}{x} dx\right|$$
$$\leq C\left(\frac{\sqrt{t}\|\{A_j(t)\}\|_{l^1}}{x_1}+\frac{\sqrt{t}\|\{jA_j(t)\}\|_{l^1}}{x_1^2}+\frac{\|\{j^2A_j(t)\}\|_{l^1}}{x_1\sqrt{t}}+\frac{\|\{jA_j(t)\}\|_{l^1}\|\{A_j(t)\}\|_{l^1}}{x_1}\right).$$
Above we have used that the term $j=k$ cancels.
 Now we perform an integration by parts using the linear phase, even though we don't improve the decay in $x$:
$$\left|T(t,x_2)-T(t,x_1)+2\Im \sum_{j\neq k}\overline{A_j(t)}A_k(t) e^{i\frac{k^2-j^2}{4t}}\int_{x_1}^{x_2} e^{i\frac{x(j-k)}{2t}}\frac{2t}{i(j-k)}\left(\frac{T(t,x)}{x}\right)_x dx\right|$$
$$\leq C\left(\frac{\sqrt{t}\|\{A_j(t)\}\|_{l^1}}{x_1}+\frac{\sqrt{t}\|\{jA_j(t)\}\|_{l^1}}{x_1^2}+\frac{\|\{j^2A_j(t)\}\|_{l^1}}{x_1\sqrt{t}}+\frac{\|\{jA_j(t)\}\|_{l^1}\|\{A_j(t)\}\|_{l^1}}{x_1}\right).$$
In this way we can use that $T_x=\Re(\overline{u} N)$, so that a new oscillatory term with a quadratic phase appears:
$$\left|T(t,x_2)-T(t,x_1)\right.$$
$$+2\sqrt{t}\Im \sum_{j\neq k; r}\overline{A_j(t)}A_k(t)\overline{A_r(t)} \frac{e^{i\frac{-r^2+k^2-j^2}{4t}}}{i(j-k)}\int_{x_1}^{x_2} e^{-i\frac{x^2}{4t}}\frac{e^{i\frac{x(j-k+r)}{2t}}}{x}N(t,x) dx$$
$$\left.+2\sqrt{t}\Im \sum_{j\neq k; r}\overline{A_j(t)}A_k(t)A_r(t) \frac{e^{i\frac{r^2+k^2-j^2}{4t}}}{i(j-k)}\int_{x_1}^{x_2} e^{i\frac{x^2}{4t}}\frac{e^{i\frac{x(j-k-r)}{2t}}}{x}\overline{N(t,x)} dx\right|$$
$$\leq C\left(\frac{\sqrt{t}\|\{A_j(t)\}\|_{l^1}}{x_1}+\frac{\sqrt{t}\|\{jA_j(t)\}\|_{l^1}}{x_1^2}+\frac{\|\{j^2A_j(t)\}\|_{l^1}}{x_1\sqrt{t}}+\frac{\|\{jA_j(t)\}\|_{l^1}\|\{A_j(t)\}\|_{l^1}}{x_1}\right).$$
Hence,  we can perform again an integration by parts to get decay in space:
$$\left|T(t,x_2)-T(t,x_1)\right.$$
$$-4t\sqrt{t}\Re \sum_{j\neq k; r}\overline{A_j(t)}A_k(t)\overline{A_r(t)}  \frac{e^{i\frac{-r^2+k^2-j^2}{4t}}}{i(j-k)}\int_{x_1}^{x_2} e^{-i\frac{x^2}{4t}}\left(\frac{e^{i\frac{x(j-k+r)}{2t}}}{x^2}N(t,x)\right)_x dx$$
$$\left.+4t\sqrt{t}\Re \sum_{j\neq k; r}\overline{A_j(t)}A_k(t)A_r(t)  \frac{e^{i\frac{r^2+k^2-j^2}{4t}}}{i(j-k)}\int_{x_1}^{x_2} e^{i\frac{x^2}{4t}}\left(\frac{e^{i\frac{x(j-k-r)}{2t}}}{x^2}\overline{N(t,x)}\right)_x dx\right|$$
$$\leq C\left(\frac{\sqrt{t}\|\{A_j(t)\}\|_{l^1}}{x_1}+\frac{\sqrt{t}\|\{jA_j(t)\}\|_{l^1}}{x_1^2}+\frac{\|\{j^2A_j(t)\}\|_{l^1}}{x_1\sqrt{t}}+\frac{\|\{jA_j(t)\}\|_{l^1}\|\{A_j(t)\}\|_{l^1}}{x_1}+\frac{\sqrt{t}\|\{A_j(t)\}\|_{l^1}^3}{x_1^2}\right).$$
As $N_x=-uT$ and as $|T(t,x_2)-T(t,x_1)|\leq 2$ we have obtained for $0<x_1<x_2$:
$$\left|T(t,x_2)-T(t,x_1)\right|\leq \frac{C(t,\{\alpha_j\})}{\langle x_1\rangle},$$
with
\begin{equation}\label{constT}
C(t,\{\alpha_j\})=C\left(1+\sqrt{t}\|\{jA_j(t)\}\|_{l^1}+\frac{\|\{j^2A_j(t)\}\|_{l^1}}{\sqrt{t}}+\|\{jA_j(t)\}\|_{l^1}\|\{A_j(t)\}\|_{l^1}\right.
\end{equation}
$$\left.+\sqrt{t}\|\{A_j(t)\}\|_{l^1}^3+\frac{\|\{A_j(t)\}\|_{l^1}^3}{\sqrt{t}}+t\|\{A_j(t)\}\|_{l^1}^4\right).$$
By making $x_1,x_2\rightarrow\infty$ we thus obtain the existence of
\begin{equation}\label{limTt}T^{\infty}(t):=\underset{x\rightarrow\infty}{\lim}T(t,x),\end{equation}
with the desired rate of convergence of the statement.

Now we shall prove that this vector limit is independent of $t>0$. Let $0<t_1<t_2$ and $\epsilon>0$. In view of \eqref{limTt} we can choose $x_0$ such that for all $x\geq x_0$ we have
$$|T(t_1,x)-T^\infty(t_1)|+|T(t_2,x)-T^\infty(t_2)|\leq\epsilon.$$
Thus in order to get the conclusion \eqref{limT} of the Lemma, it will be enough to find $x\geq x_0$ such that 
\begin{equation}\label{limTtx}|T(t_2,x)-T(t_1,x)|\leq\epsilon.\end{equation}
To this purpose we use that $T_t=\Im(\overline{u_x} N), N_t=-iu_x T+i\left(\frac{|u|^2}2-\frac{M}{2t}\right)N$. These expressions involve a loss of $x$. However, if a quadratic oscillatory phase $e^{-i\frac{x^2}{4t}}$ is present, integrating it in time yields $\frac1{x^2}$ decay, so eventually we gain $\frac 1x$ decay with each such integration by parts:
$$T(t_2,x)-T(t_1,x)=\Im \int_{t_1}^{t_2}\sum_je^{i(|\alpha_j|^2-M)\log \sqrt{t}}\overline{\tilde A_j(t)}\frac{e^{-i\frac{(x-j)^2}{4t}}}{\sqrt{t}}(-i)\frac{x-j}{2t} N(t,x) dt$$
$$=O(\frac 1x)-2\Im  \int_{t_1}^{t_2}\sum_j e^{-i\frac{x^2}{4t}}\frac{x-j}{x^2}\left(e^{i(|\alpha_j|^2-M)\log \sqrt{t}}\overline{\tilde A_j(t)}e^{i\frac{xj}{2t}-i\frac{j^2}{4t}}\sqrt{t}N(t,x)\right)_t dt$$
$$=O(\frac 1x)+\Re  \int_{t_1}^{t_2}\sum_j e^{-i\frac{x^2}{4t}}\frac{x-j}{x}e^{i(|\alpha_j|^2-M)\log \sqrt{t}}j\overline{\tilde A_j(t)}\frac{e^{i\frac{xj}{2t}-i\frac{j^2}{4t}}}{t\sqrt{t}}N(t,x) dt$$
$$-2\Im  \int_{t_1}^{t_2}\sum_j e^{-i\frac{x^2}{4t}}\frac{x-j}{x^2}e^{i(|\alpha_j|^2-M)\log \sqrt{t}}\overline{\tilde A_j(t)}e^{i\frac{xj}{2t}-i\frac{j^2}{4t}}\sqrt{t}N_t(t,x) dt.$$
In the first integral we perform again an integration by parts from the quadratic case to get the desired $\frac 1x$ decay, while for the second integral we have to treat only the $iu_xT$ part of $N_t$:
$$T(t_2,x)-T(t_1,x)=O(\frac 1x)$$
$$+2\Im  \int_{t_1}^{t_2}\sum_{j\neq k} \frac{(x-j)(x-k)}{x^2}e^{i(|\alpha_j|^2-|\alpha_k|^2)\log \sqrt{t}}\overline{\tilde A_j(t)}\tilde A_k(t)e^{i\frac{x(j-k)}{2t}-i\frac{j^2-k^2}{4t}}\frac {T(t,x)}{t} dt.$$
Now we perform an integration by parts using the linear phase in $x$ to get:
$$T(t_2,x)-T(t_1,x)=O(\frac 1x)$$
$$+4\Re  \int_{t_1}^{t_2}\sum_{j\neq k} \frac{(x-j)(x-k)}{x^3(j-k)}e^{i\frac{x(j-k)}{2t}}\left(e^{i(|\alpha_j|^2-|\alpha_k|^2)\log \sqrt{t}}\overline{\tilde A_j(t)}\tilde A_k(t)e^{-i\frac{j^2-k^2}{4t}}tT(t,x)\right)_t dt$$
$$=O(\frac 1x)$$
$$+4\Re  \int_{t_1}^{t_2}\sum_{j\neq k} \frac{(x-j)(x-k)}{x^3(j-k)}e^{i\frac{x(j-k)}{2t}}e^{i(|\alpha_j|^2-|\alpha_k|^2)\log \sqrt{t}}\overline{\tilde A_j(t)}\tilde A_k(t)e^{-i\frac{j^2-k^2}{4t}}t\times$$
$$\times\Im \left(\sum_r e^{i(|\alpha_r|^2-M)\log \sqrt{t}}\overline{\tilde A_r(t)}\frac{e^{-i\frac{(x-r)^2}{4t}}}{\sqrt{t}}(-i)\frac{x-r}{2t}  N(t,x)\right)dt.$$
Although  we  still do not get enough decay in $x$  we have got a quadratic phase in $x$. Hence, we perform another integration by parts using it to get an extra $\frac 1x$ decay:
$$T(t_2,x)-T(t_1,x)=O(\frac 1x).$$
Therefore we can find $x$ depending on $x_0,t_1,t_2$ and $\{\alpha_j\}$ such that \eqref{limTtx} holds, and the Lemma follows.

\end{proof}

\begin{lemma}\label{lemmalimN}
There exist $N^{\pm \infty}\in \Bbb S^2+ i\Bbb S^2$, $\Bbb S^2$ denoting the unit sphere in $\mathbb R^3$, such that for all $t>0$
\begin{equation}\label{limN}N^{\pm \infty}=\underset{x\rightarrow\pm\infty}{\lim}N_M(t,x),\end{equation}
where for $x\neq 0$
$$N_M(t,x)=e^{iM\log\frac {|x|}{\sqrt{t}}}N(t,x).$$
As a consequence we also have
$$N^{\pm \infty}=\underset{x\rightarrow\pm\infty}{\lim}e^{iM\log\frac {\langle x\rangle}{\sqrt{t}}}N(t,x).$$
Moreover, we have the following rate of convergence 
$$|e^{iM\log\frac {\langle x\rangle}{\sqrt{t}}}N(t,x)-N^{\pm \infty}|\leq\frac{C(t,\{\alpha_j\})}{\langle x\rangle}, \forall x\in\mathbb R^{*\pm}.$$

\end{lemma}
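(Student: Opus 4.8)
The plan is to follow the scheme of the proof of Lemma \ref{lemmalimT}, the only genuinely new point being that the modulation factor $e^{iM\log\frac{|x|}{\sqrt t}}$ is exactly the one that absorbs the resonant (``diagonal'') contributions produced by the integrations by parts, both in the space variable for fixed $t$ and in the time variable for fixed large $x$. We treat the limit at $+\infty$; the case of $-\infty$ is identical after replacing $|x|$ by $-x$. Fix $t>0$ and $0<x_1<x_2$. Since $N_x=-uT$ and $\partial_x e^{iM\log\frac x{\sqrt t}}=\frac{iM}{x}e^{iM\log\frac x{\sqrt t}}$,
\[
N_M(t,x_2)-N_M(t,x_1)=\int_{x_1}^{x_2}e^{iM\log\frac x{\sqrt t}}\Big(\frac{iM}{x}N(t,x)-u(t,x)T(t,x)\Big)dx .
\]

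In the integral containing $uT$ I would integrate by parts using the quadratic oscillatory phase $e^{i\frac{(x-j)^2}{4t}}$ of each mode of $u$, exactly as in Lemma \ref{lemmalimT}: this produces a boundary term of size $C(t,\{\alpha_j\})/\langle x_1\rangle$, absolutely convergent integrals carrying a $\frac1{x(x-j)}$ or $\frac1{(x-j)^2}$ weight, and, after inserting $T_x=\Re(\overline u N)=\tfrac12(\overline u N+u\overline N)$, the integral
\[
\tfrac1i\sum_{j,k}A_j(t)\overline{A_k(t)}\int_{x_1}^{x_2}\frac{e^{iM\log\frac x{\sqrt t}}\,e^{i\frac{(x-j)^2-(x-k)^2}{4t}}}{x-j}\,N(t,x)\,dx
\]
together with its analogue obtained by replacing $\overline{A_k(t)}$ by $A_k(t)$, $N$ by $\overline N$ and the sign of $(x-k)^2$ in the phase; this last integral has a phase $\frac{(x-j)^2+(x-k)^2}{4t}$ that is never stationary, so one further integration by parts makes it absolutely convergent and $O(1/x_1)$. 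In the displayed integral I would split off the diagonal $j=k$: by the mass conservation law \eqref{consl2}, $\sum_j|A_j(t)|^2=M$, it equals $\tfrac1i M\int_{x_1}^{x_2}\frac{N_M(t,x)}{x}\,dx=-iM\int_{x_1}^{x_2}\frac{N_M(t,x)}{x}\,dx$, up to an absolutely convergent correction coming from $\frac1{x-j}-\frac1x=\frac{j}{x(x-j)}$ (here one uses $\sum_j|j|\,|A_j(t)|^2<\infty$). The crucial point is that this diagonal term cancels $\int_{x_1}^{x_2}e^{iM\log\frac x{\sqrt t}}\frac{iM}{x}N\,dx=iM\int_{x_1}^{x_2}\frac{N_M(t,x)}{x}\,dx$. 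The off-diagonal terms $j\ne k$ carry a nonzero linear phase $e^{i\frac{(j-k)x}{2t}}$ and are treated by the same alternating integration-by-parts mechanism as in Lemma \ref{lemmalimT}: one integration by parts in the linear phase brings a factor $\frac1{j-k}$ and a derivative falling on $\frac{N}{x-j}$; using $N_x=-uT$ a quadratic phase reappears, a further integration by parts gains an extra $\frac1x$, and after finitely many such steps every term is absolutely convergent with an $O(1/x_1)$ bound. All the series in $j,k,\dots$ converge because $|A_j(t)|\le C\langle j\rangle^{-s}$ with $s\ge3$ by \eqref{decayansatzcubic}, and the resulting constant has the structure of \eqref{constT}. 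Collecting everything, the two resonant terms cancel and we obtain $|N_M(t,x_2)-N_M(t,x_1)|\le C(t,\{\alpha_j\})/\langle x_1\rangle$, hence the existence of $N^{\infty}(t)=\lim_{x\to\infty}N_M(t,x)$ together with the stated rate.

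To prove that $N^{\infty}(t)$ does not depend on $t$, I would fix a large $x$ and estimate $N_M(t_2,x)-N_M(t_1,x)=\int_{t_1}^{t_2}(N_M)_t\,dt$, where by \eqref{TNtx} and $\partial_t e^{iM\log\frac{|x|}{\sqrt t}}=-\frac{iM}{2t}e^{iM\log\frac{|x|}{\sqrt t}}$,
\[
(N_M)_t=e^{iM\log\frac{|x|}{\sqrt t}}\Big(-iu_xT+\tfrac i2\big(|u|^2-\tfrac Mt\big)N-\tfrac{iM}{2t}N\Big).
\]
Now one integrates by parts in $t$ using the phase $e^{i\frac{(x-j)^2}{4t}}$, whose $t$-derivative is $-\frac{(x-j)^2}{4t^2}\neq0$ and whose inversion brings a gain $\sim t^2/x^2$; combined with the factor $\frac{x-j}{2t}$ of $u_x$ and, after inserting $T_t=\Im(\overline{u_x}N)$, with the factor $\frac{x-k}{2t}$ of $\overline{u_x}$, the diagonal $j=k$ yields, via \eqref{consl2}, exactly $\frac{iM}{2}\int_{t_1}^{t_2}\frac{N_M(t,x)}{t}\,dt$, while the term $\tfrac i2(|u|^2-\tfrac Mt)N$ is purely off-diagonal --- this is where the choice $a(t)=M/t$ in \eqref{NLS} enters --- and produces only $O(1/x)$. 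This resonant contribution cancels $\int_{t_1}^{t_2}e^{iM\log\frac{|x|}{\sqrt t}}\big(-\tfrac{iM}{2t}\big)N\,dt=-\frac{iM}{2}\int_{t_1}^{t_2}\frac{N_M(t,x)}{t}\,dt$, and all other terms are $O(1/x)$ after finitely many integrations by parts in $t$, exactly as in the time-independence argument of Lemma \ref{lemmalimT}. Hence $|N_M(t_2,x)-N_M(t_1,x)|\le C(t_1,t_2,\{\alpha_j\})/\langle x\rangle$ for every large $x$, so $N^{\infty}(t_1)=N^{\infty}(t_2)=:N^{\infty}$. Moreover $|N(t,x)|^2=|e_1|^2+|e_2|^2=2$ and $N\cdot N=|e_1|^2-|e_2|^2+2ie_1\cdot e_2=0$ are constant in $x$ (a one-line computation from $N_x=-uT$ and $T\perp e_1,e_2$), so $|N_M|\equiv\sqrt2$ and $N_M\cdot N_M\equiv0$, which forces $N^{\infty}\in\mathbb S^2+i\mathbb S^2$. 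Finally, since $\log\frac{\langle x\rangle}{|x|}=\log\big(1+\tfrac1{|x|}\big)=O(1/|x|)$, we get $|e^{iM\log\frac{\langle x\rangle}{\sqrt t}}N(t,x)-N_M(t,x)|\le |N|\,M\log\big(1+\tfrac1{|x|}\big)\le C(t)/\langle x\rangle$, which yields the ``as a consequence'' statement together with the claimed rate of convergence.

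The main obstacle is, as in Lemma \ref{lemmalimT}, the bookkeeping: one has to organize the a priori infinitely many cross terms so that, once the single logarithmically divergent resonant term is isolated and cancelled against the modulation, every remaining term obeys an $O(1/x_1)$ bound (in the spatial argument) or an $O(1/x)$ bound (in the temporal argument), keeping track of the $l^1$-type norms $\|\{A_j(t)\}\|_{l^1}$, $\|\{jA_j(t)\}\|_{l^1}$, $\|\{j^2A_j(t)\}\|_{l^1}$, $\|\{j\,\partial_tA_j(t)\}\|_{l^1}$, $\dots$ that enter $C(t,\{\alpha_j\})$. Once the role of the modulation $e^{iM\log\frac{|x|}{\sqrt t}}$ in absorbing the resonances is recognized, the estimates are precisely the alternating quadratic/linear integrations by parts already performed for the tangent vector.
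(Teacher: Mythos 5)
Your proposal is correct and follows essentially the same route as the paper's proof: alternating integrations by parts on the quadratic and linear phases, with the single logarithmically divergent resonance absorbed by the modulation via the mass conservation law $M=\sum_j|A_j(t)|^2$, and the analogous diagonal cancellation in the time-independence step. The only cosmetic difference is that you integrate by parts on the shifted phase $(x-j)^2/4t$, obtaining $\frac1{x-j}$ weights and then correcting via $\frac1{x-j}-\frac1x=\frac{j}{x(x-j)}$, whereas the paper first peels off $e^{-ixj/2t+ij^2/4t}$ and integrates by parts on the central phase $x^2/4t$, obtaining $\frac1x$ directly at the cost of an extra $j/x$ term from the linear factor; both require the same $\ell^1$ control of $\{jA_j\}$. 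You also spell out the verification that $N^{\pm\infty}\in\Bbb S^2+i\Bbb S^2$, which the paper takes for granted from the orthonormality of the frame.
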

\begin{proof} As done for the tangent vector, we shall first prove that for fixed $t>0$ there exists a limit vector $N^{\infty}(t)$ for $N_M(t,x)$ as $x$ goes to $\infty$ ; the asymptotic at $-\infty$ can be treated in the same way. 

As for $x>0$
$$(N_M)_x=(-uT+i\frac{M}{x}N)e^{iM\log\frac x{\sqrt{t}}},\quad T_x=\Re(\overline{u}N),$$ 
we get for $0<x_1<x_2$ by integrating by parts:
$$N_M(t,x_2)-N_M(t,x_1)$$
$$=\int_{x_1}^{x_2}\left(-\sum_jA_j(t)\frac{e^{i\frac{(x-j)^2}{4t}}}{\sqrt{t}} T(t,x)+i\frac{M}{x}N(t,x)\right)e^{iM\log\frac x{\sqrt{t}}} dx$$
$$=\left[-\sum_jA_j(t)e^{i\frac{x^2}{4t}}\frac{2t}{ix}\frac{e^{-i\frac{xj}{2t}+i\frac{j^2}{4t}  }}{\sqrt{t}} T(t,x)e^{iM\log\frac x{\sqrt{t}}}\right]_{x_1}^{x_2}$$
$$-\int_{x_1}^{x_2}2i\sqrt{t}\sum_je^{i\frac{j^2}{4t} }A_j(t)e^{i\frac{x^2}{4t}}\left(e^{-i\frac{xj}{2t} }T(t,x)\frac{e^{iM\log\frac x{\sqrt{t}}}}{x}\right)_xdx$$
$$+\int_{x_1}^{x_2}i\frac{M}{x}N(t,x)e^{iM\log\frac x{\sqrt{t}}} dx.$$
Thus
$$\left|N_M(t,x_2)-N_M(t,x_1)+\int_{x_1}^{x_2}\frac 1{\sqrt{t}}\sum_je^{i\frac{j^2}{4t} }jA_j(t)e^{i\frac{x^2}{4t}}e^{-i\frac{xj}{2t} }T(t,x)\frac{e^{iM\log\frac x{\sqrt{t}}}}{x}dx\right.$$
$$+\int_{x_1}^{x_2}2i\sum_jA_j(t)e^{i\frac{(x-j)^2}{4t}}\Re\left(\sum_k\overline{A_k(t)}e^{-i\frac{(x-k)^2}{4t}}N(t,x)\right)\frac{e^{iM\log\frac x{\sqrt{t}}}}{x}dx$$
$$\left.-\int_{x_1}^{x_2}i\frac{M}{x}N(t,x)e^{iM\log\frac x{\sqrt{t}}} dx\right|\leq C\frac{\sqrt{t}\|\{A_j(t)\}\|_{l^1}}{x_1}.$$
In the first integral we perform again an integration by parts using the quadratic phase $x^2$, and get integrability with a $\frac 1{x_1}$ decay. In the second integral we develop the real part. The diagonal $k=j$ terms of its non-conjugated part cancel with the third integral, as we have the conservation law $M=\sum_j|\alpha_j|^2=\sum_j|A_j(t)|^2$. We are left with:
$$\left|N_M(t,x_2)-N_M(t,x_1)+i\sum_{j\neq k}A_j(t)\overline{A_k(t)}e^{i\frac{j^2-k^2}{4t}}\int_{x_1}^{x_2}e^{-i\frac{x(j-k)}{2t}}N(t,x)\frac{e^{iM\log\frac x{\sqrt{t}}}}{x}dx\right.$$
$$\left.+i\sum_{j,k}A_j(t)A_k(t)e^{i\frac{j^2+k^2}{4t}}\int_{x_1}^{x_2}e^{i\frac{x^2}{2t}}e^{-i\frac{x(j+k)}{2t}}\overline{N(t,x)}\frac{e^{iM\log\frac x{\sqrt{t}}}}{x}dx\right|.$$
$$\leq C\left(\frac{\sqrt{t}\|\{A_j(t)\}\|_{l^1}}{x_1}+\frac{\sqrt{t}\|\{jA_j(t)\}\|_{l^1}}{x_1^2}+\frac{\|\{j^2A_j(t)\}\|_{l^1}}{x_1\sqrt{t}}+\frac{\|\{jA_j(t)\}\|_{l^1}\|\{A_j(t)\}\|_{l^1}}{x_1}\right)$$
In the second integral, a new integration by parts using the quadratic phase $x^2$ yields integrability with a $\frac 1{x_1}$ decay. In the first integral we integrate by parts using the linear phase $x(j-k)$:
$$\left|N_M(t,x_2)-N_M(t,x_1)+2t\sum_{j\neq k}\frac{A_j(t)\overline{A_k(t)}}{j-k}e^{i\frac{j^2-k^2}{4t}}\int_{x_1}^{x_2}e^{-i\frac{x(j-k)}{2t}}N_x(t,x)\frac{e^{iM\log\frac x{\sqrt{t}}}}{x}dx\right|$$
$$\leq C\left(\frac{\sqrt{t}\|\{A_j(t)\}\|_{l^1}}{x_1}+\frac{\sqrt{t}\|\{jA_j(t)\}\|_{l^1}}{x_1^2}+\frac{\|\{j^2A_j(t)\}\|_{l^1}}{x_1\sqrt{t}}+\frac{\|\{jA_j(t)\}\|_{l^1}\|\{A_j(t)\}\|_{l^1}}{x_1}+\frac{\sqrt{t}\|\{A_j(t)\}\|_{l^1}^3}{x_1^2}\right).$$

As $N_x(t,x)=-uT(t,x)$ contains $e^{i\frac{x^2}{4t}}$, we perform a last integration by parts using this quadratic phase to get for all $0<x_1<x_2$:
$$\left|N_M(t,x_2)-N_M(t,x_1)\right|\leq \frac{C(t,\{\alpha_j\})}{\langle x_1\rangle},$$
with the same constant $C(t,\{\alpha_j\})$ as in \eqref{constT}:
$$C(t,\{\alpha_j\})=C\left(1+\sqrt{t}\|\{jA_j(t)\}\|_{l^1}+\frac{\|\{j^2A_j(t)\}\|_{l^1}}{\sqrt{t}}+\|\{jA_j(t)\}\|_{l^1}\|\{A_j(t)\}\|_{l^1}\right.$$
$$\left.+\sqrt{t}\|\{A_j(t)\}\|_{l^1}^3+\frac{\|\{A_j(t)\}\|_{l^1}^3}{\sqrt{t}}+t\|\{A_j(t)\}\|_{l^1}^4\right).$$
It follows that we have a limit \begin{equation}\label{limNt}
N^\infty(t):=\underset{x\rightarrow\infty}{\lim}N_M(t,x),\end{equation}
with a rate of convergence in space as in the statement of the lemma.

We are thus left to show the independence on time of $N^\infty(t)$. We fix $0<t_1<t_2$ and $\epsilon>0$, choose $x_0$ such that 
$$|N_M(t_1,x)-N^\infty(t_1)|+|N_M(t_2,x)-N^\infty(t_2)|\leq\epsilon.$$
To finish the proof of the lemma, it will be enough to find $x\geq x_0$ such that 
\begin{equation}\label{limNtx}|N_M(t_2,x)-N_M(t_1,x)|\leq\epsilon.\end{equation}
As the evolution in time laws are 
$$T_t=\Im(\overline{u_x} N),\quad (N_M)_t=\left(-iu_x T+i\left(\frac{|u|^2}2-\frac{M}{2t}\right)N-i\frac M{2t}N\right)e^{iM\log\frac x{\sqrt{t}}},$$
we can write
$$N_M(t_2,x)-N_M(t_1,x)=\int_{t_1}^{t_2}\left(-i\sum_je^{-i(|\alpha_j|^2-M)\log \sqrt{t}}\tilde A_j(t)\frac{e^{i\frac{(x-j)^2}{4t}}}{\sqrt{t}}i\frac{x-j}{2t} T(t,x)\right.$$
$$\left.+i\sum_{j\neq k}e^{-i(|\alpha_j|^2-|\alpha_k|^2)\log \sqrt{t}}\tilde A_j(t)\overline{\tilde A_k(t)}\frac{e^{i\frac{j^2-k^2}{4t}-i\frac{x(j-k)}{2t}}}{2t}N-i\frac M{2t}N\right)e^{iM\log\frac x{\sqrt{t}}}dt.$$
In the first integral we perform an integration by parts using the quadratic phase, while in the second we use the linear one:
$$N_M(t_2,x)-N_M(t_1,x)=\left[\sum_je^{-i(|\alpha_j|^2-M)\log \sqrt{t}}\tilde A_j(t)\frac{e^{i\frac{(x-j)^2}{4t}}}{\sqrt{t}}(-\frac{4t^2}{ix^2})\frac{x-j}{2t} T(t,x)e^{iM\log\frac x{\sqrt{t}}}\right]_{t_1}^{t_2}$$
$$-2i\int_{t_1}^{t_2}\sum_j \frac{x-j}{x^2}e^{i\frac{x^2}{4t}}\left(e^{-i(|\alpha_j|^2-M)\log \sqrt{t}}\tilde A_j(t)e^{-i\frac{xj}{2t}+i\frac{j^2}{4t}}\sqrt{t} \,T(t,x)e^{iM\log\frac x{\sqrt{t}}}\right)_tdt$$
$$+\left[i\sum_{j\neq k}e^{-i(|\alpha_j|^2-|\alpha_k|^2)\log \sqrt{t}}\tilde A_j(t)\overline{\tilde A_k(t)}\frac{e^{i\frac{j^2-k^2}{4t}-i\frac{x(j-k)}{2t}}}{2t}\frac{2t^2}{ix(j-k)}Ne^{iM\log\frac x{\sqrt{t}}}\right]_{t_1}^{t_2}$$
$$-\int_{t_1}^{t_2}i\sum_{j\neq k}\frac{1}{x(j-k)}e^{-i\frac{x(j-k)}{2t}}\left(e^{-i(|\alpha_j|^2-|\alpha_k|^2)\log \sqrt{t}}\tilde A_j(t)\overline{\tilde A_k(t)}e^{i\frac{j^2-k^2}{4t}}t\,N e^{iM\log\frac x{\sqrt{t}}}\right)_tdt$$
$$-\int_{t_1}^{t_2}i\frac M{2t}Ne^{iM\log\frac x{\sqrt{t}}}dt$$
$$=O(\frac 1x)+\int_{t_1}^{t_2}\sum_j \frac{x-j}{x}e^{i\frac{x^2}{4t}}e^{-i(|\alpha_j|^2-M)\log \sqrt{t}}j\tilde A_j(t)e^{-i\frac{xj}{2t}+i\frac{j^2}{4t}}\frac1{t\sqrt{t}} \,T(t,x)e^{iM\log\frac x{\sqrt{t}}}dt$$
$$-2i\int_{t_1}^{t_2}\sum_j \frac{x-j}{x^2}e^{i\frac{x^2}{4t}}e^{-i(|\alpha_j|^2-M)\log \sqrt{t}}\tilde A_j(t)e^{-i\frac{xj}{2t}+i\frac{j^2}{4t}}\sqrt{t} \,T_t(t,x)e^{iM\log\frac x{\sqrt{t}}}dt$$
$$-\int_{t_1}^{t_2}i\sum_{j\neq k}\frac{1}{x(j-k)}e^{-i\frac{x(j-k)}{2t}}e^{-i(|\alpha_j|^2-|\alpha_k|^2)\log \sqrt{t}}\tilde A_j(t)\overline{\tilde A_k(t)}e^{i\frac{j^2-k^2}{4t}}t\,N_t e^{iM\log\frac x{\sqrt{t}}}dt$$
$$-\int_{t_1}^{t_2}i\frac M{2t}Ne^{iM\log\frac x{\sqrt{t}}}dt:=O(\frac 1x)+I_1+I_2+I_3+I_4.$$
In the first integral $I_1$  an integration by p arts using the quadratic phase gives us the $\frac 1x$ decay. The second integral can be rewritten as
$$I_2=O(\frac 1x)-\frac {2i}x\int_{t_1}^{t_2}\sum_j e^{i\frac{x^2}{4t}}e^{-i(|\alpha_j|^2-M)\log \sqrt{t}}\tilde A_j(t)e^{-i\frac{xj}{2t}+i\frac{j^2}{4t}}\sqrt{t} \,\Im (\overline{u_x}N(t,x))e^{iM\log\frac x{\sqrt{t}}}dt$$
$$=O(\frac 1x)+i\int_{t_1}^{t_2}\sum_{j,k} e^{i\frac{(x-j)^2-(x-k)^2}{4t}}e^{-i(|\alpha_j|^2-|\alpha_k|^2)\log \sqrt{t}}\tilde A_j(t)\overline{\tilde A_k(t)}\frac 1{2t}N(t,x)e^{iM\log\frac x{\sqrt{t}}}dt$$
$$-i\int_{t_1}^{t_2}\sum_{j,k} e^{i\frac{(x-j)^2+(x-k)^2}{4t}}e^{-i(|\alpha_j|^2+|\alpha_k|^2-2M)\log \sqrt{t}}\tilde A_j(t)\tilde A_k(t)\frac 1{2t}\overline{N(t,x)}e^{iM\log\frac x{\sqrt{t}}}dt$$
$$=O(\frac 1x)-I_4+i\int_{t_1}^{t_2}\sum_{j\neq k} e^{i\frac{x(j-k)}{2t}-i\frac{j^2-k^2}{4t}}e^{-i(|\alpha_j|^2-|\alpha_k|^2)\log \sqrt{t}}\tilde A_j(t)\overline{\tilde A_k(t)}\frac 1{2t}N(t,x)e^{iM\log\frac x{\sqrt{t}}}dt$$
$$-i\int_{t_1}^{t_2}\sum_{j,k} e^{i\frac{x^2}{2t}}e^{i\frac{-x(j+k)}{2t}+i\frac{j^2+k^2}{4t}}e^{-i(|\alpha_j|^2+|\alpha_k|^2-2M)\log \sqrt{t}}\tilde A_j(t)\tilde A_k(t)\frac 1{2t}\overline{N(t,x)}e^{iM\log\frac x{\sqrt{t}}}dt,$$
where we used the conservation law $M=\sum_j|\alpha_j|^2=\sum_j|\tilde A_j(t)|^2$. 
In the first integral we integrate by parts using the linear phase in $x$, that gives the decay $\frac 1x$ except when the derivative in time falls on $N$. This term  involves a power of $x$ but also an oscillatory term with a quadratic phase in $x$. Another integration by parts  gives eventually the decay $\frac 1x$. In the last integral a new integration by parts using the quadratic phase gives immediately the decay $\frac 1x$. Therefore
$$I_2+I_4=O(\frac 1x).$$
Finally, in $I_3$ there is a factor $\frac 1x$ and from $N_t$ we loose a power of $x$ just for the term $-u_xT$. However, this term introduces back the quadratic phase in $x$, and a new integration by parts  yields the $\frac 1x$ decay. Therefore
$$N_M(t_2,x)-N_M(t_1,x)=O(\frac 1x),$$
so \eqref{limNtx} follows. The proof of the lemma is over.

\end{proof}

\section{Proof of Theorem \ref{th}}\label{sectth}
\subsection{The result on the tangent vector} We start with the proof of the results at time $t=0$, namely \eqref{energyt0}. 
We will rely from section 4.6 in \cite{BV5} that at $t=0$ the curve is a polygonal line so that $T(0,x)$ is  piecewise constant with jumps at the integers $j\in \Bbb Z$ and that
$$T_x(0)=\sum_j(T(0,j^+)-T(0,j^-))\delta_j=\sum_j \Theta_j(A_{|\alpha_j|}^+-A_{|\alpha_j|}^-)\delta_j.$$
Here $\Theta_j$ denotes an appropriate rotation (see \cite{BV5}) and $A^\pm_{|\alpha_j|}$ are the two unit vectors representing the limits at $\pm\infty$ of the tangent of the self-similar solution $\chi_{|\alpha_j|}$. Then, we have
$$\widehat{T_x}(0,\xi)=\sum_j \Theta_j(A_{|\alpha_j|}^+-A_{|\alpha_j|}^-)e^{i2\pi j\xi}.$$
In particular $\widehat{T_x}(0,\xi)$ is periodic in $\xi$ and we get by Plancherel's theorem that for any $k$
$$\int_k^{k+1}|\widehat{T_x}(0,\xi)|^2d\xi=\sum_j |\Theta_j(A_{|\alpha_j|}^+-A_{|\alpha_j|}^-)|^2=\sum_j |A_{|\alpha_j|}^+-A_{|\alpha_j|}^-|^2.$$
Therefore calling $\theta_j$ the angle between $A_{|\alpha_j|}^+$ and $A_{|\alpha_j|}^-$ and using (3) and (4) in \cite{BV5} we have
 $$ |A_{|\alpha_j|}^+-A_{|\alpha_j|}^-|^2= 2(1-\cos \theta_j)=4(1-e^{-\pi |\alpha_j|^2}),$$ 
 so that we obtain \eqref{energyt0}, and implicitly \eqref{energy0}.\\

Now we fix $t>0$ and our purpose it to compute $\Xi(t)$ and to obtain \eqref{cons}. Let $0<\epsilon<1$. In view of \eqref{decayansatzcubic} we choose $j_\epsilon$ depending on $\epsilon, t$ and $\{\alpha_j\}$ such that
\begin{equation}\label{tailA}
\sum_{|j|\geq j_\epsilon} |A_j(t)|\leq\epsilon.
\end{equation}
In the following  $C$ will denote a generic constant dependent on $t$ and $\{\alpha_j\}$, unless it is specified othewise.

Since $T_x(t,x)=\Re(\overline{u} N)(t,x)$ we have
$$\widehat{T_x}(t,\xi)=\int_{-\infty}^\infty e^{i2\pi x\xi}\,\Re(\overline{u} N)(t,x)dx$$
$$=\int_{-\infty}^\infty e^{i2\pi x\xi}\,\Re(\sum_j\overline{A_j(t)}\frac{e^{-i\frac{(x-j)^2}{4t}}}{\sqrt{t}} N(t,x))dx.$$
We denote $\eta^+$ a smooth function vanishing on $x<-\frac 12$  and valued $1$ on $x>\frac 12$, and we denote $\eta^-=1-\eta^+$, so that
$$\widehat{T_x}(t,\xi)=\sum_\pm \int_{-\infty}^\infty e^{i2\pi x\xi}\,\Re(\sum_j\overline{A_j(t)}\frac{e^{-i\frac{(x-j)^2}{4t}}}{\sqrt{t}} N(t,x))\,\eta^\pm(x)dx.$$
With the notations from Lemma \ref{lemmalimN}, on the integral involving $\eta^\pm$ we split 
$$N(t,x)=N^{\pm \infty}  e^{-iM\log\frac  {\langle x\rangle}{\sqrt{t}}}+g_N^\pm (t,x),$$
where$$g_N^\pm (t,x):=(N(t,x)-N^{\pm \infty} e^{-iM\log\frac  {\langle x\rangle}{\sqrt{t}}}).$$
We define
\begin{equation}\label{decT}\widehat{T_x}(t,\xi)=I(t,\xi)+J(t,\xi),\end{equation}
where $I(t,\xi)$ gathers the terms in $\widehat{T_x}(t,\xi)$ corresponding to $N^{\pm \infty}$ and $J(t,\xi)$ the ones corresponding to $g_N^\pm$.
We shall start by estimating the second term $J(t,\xi)$.\\

First, we complete the squares of the phases:
$$J(t,\xi)=\frac 12\sum_\pm \int_{-\infty}^\infty e^{i2\pi x\xi}\,\sum_j\overline{A_j(t)}\frac{e^{-i\frac{(x-j)^2}{4t}}}{\sqrt{t}} g_N^\pm (t,x)\,\eta^\pm(x)dx$$
$$+\frac 12\sum_\pm \int_{-\infty}^\infty e^{i2\pi x\xi}\,\sum_jA_j(t)\frac{e^{i\frac{(x-j)^2}{4t}}}{\sqrt{t}} \overline{g_N^\pm (t,x)}\,\eta^\pm(x)dx$$
$$=\frac{e^{i4\pi^2t\xi^2}}{2\sqrt{t}}\sum_{\pm, j}e^{i2\pi j\xi}\,\overline{A_j(t)}\int_{-\infty}^\infty e^{-i\frac{(x-j-4\pi t\xi)^2}{4t}}g_N^\pm (t,x)\,\eta^\pm(x)dx$$
$$+\frac{e^{-i4\pi^2t\xi^2}}{2\sqrt{t}}\sum_{\pm, j}e^{i2\pi j\xi}\,A_j(t)\int_{-\infty}^\infty e^{i\frac{(x-j+4\pi t\xi)^2}{4t}}\overline{g_N^\pm (t,x)}\,\eta^\pm(x)dx.$$
We split now the summation into $|j|< j_\epsilon$ and $|j|\geq j_\epsilon$, and call the corresponding terms $J^l(t,\xi)$ and $J^h(t,\xi)$.

\begin{lemma}\label{lemma_low}
There exists $\xi(\epsilon,t,\{\alpha_j\})\in\mathbb R$ such that for $\xi\geq  \xi(\epsilon,t,\{\alpha_j\})$ and $4\pi t\xi\notin\mathbb Z$ we have the bounds
$$|J^l(t,\xi)|\leq\left\{\begin{array}{c}C\epsilon, \mbox{ if }d(2\pi \xi,\frac{\mathbb Z}{2t})\geq 1,\vspace{2mm}\\C\epsilon \,|\log (d(2\pi \xi,\frac{\mathbb Z}{2t}))|, \mbox{ if }d(2\pi \xi,\frac{\mathbb Z}{2t})<  1.\end{array} \right.$$

\end{lemma}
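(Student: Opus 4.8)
The plan is to use the explicit form of $J^l$ displayed just above: up to bounded factors (for $t$ fixed) it is a \emph{finite} sum --- over $|j|<j_\epsilon$, over $\pm$, and over the two conjugation types produced by the real part in $T_x=\Re(\overline uN)$ --- of oscillatory integrals $\int_{\mathbb R}e^{\mp i\frac{(x-x_0)^2}{4t}}g_N^{\pm}(t,x)\,\eta^{\pm}(x)\,dx$ with $x_0=j\pm4\pi t\xi$ (one of the two types carrying $\overline{g_N^{\pm}}$), so it is enough to estimate one such integral and then sum the $O(j_\epsilon)$ of them. Choosing $\xi(\epsilon,t,\{\alpha_j\})$ large --- large compared with $j_\epsilon$, $1/t$ and $1/\epsilon$ --- forces $|x_0|>1$, so the stationary point $x_0$ is either deep inside or entirely outside the support of the relevant cut-off. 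Throughout I write $d_0:=d(2\pi\xi,\frac{\mathbb Z}{2t})$.

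For the two integrals whose stationary point lies outside the support of $\eta^{\pm}$ there is no critical point on the support, and the phase derivative is $\gtrsim\xi$ there. One integrates by parts repeatedly, removing the $x$-derivatives that appear by substituting $N_x=-uT$, $T_x=\Re(\overline uN)$ and re-completing the square each time, exactly as in the proofs of Lemma~\ref{lemmalimT} and Lemma~\ref{lemmalimN}; the successive critical points stay at distance $\gtrsim t\xi$ from the support, so finitely many steps give a bound $C/\xi\le C\epsilon$, with no logarithm.

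The substance is in the two integrals whose critical point $x_0$ lies inside the support; take the one with $\eta^+$ and phase $e^{-i(x-x_0)^2/4t}$, $x_0>1$. The whole $\epsilon$-gain comes from the decay of Lemma~\ref{lemmalimN}, $|g_N^+(t,x)|\le C\langle x\rangle^{-1}$: on $x<x_0/2$ the phase is non-stationary and one argues as in the previous paragraph, while on $x\ge x_0/2$, which contains the stationary region, this decay reads $|g_N^+(t,x)|\le C\langle x_0\rangle^{-1}\le C\epsilon$. One then integrates by parts as in the proofs of Lemmas~\ref{lemmalimT}--\ref{lemmalimN}, alternating between a step against the quadratic phase $e^{-i(x-x_0)^2/4t}$ --- which brings down a factor $1/(x-x_0)$ and, when the derivative meets $g_N^+$, the amplitude $-uT+O(\langle x\rangle^{-1})$ --- and, once the Gaussian $e^{i(x-k)^2/4t}$ carried by $u=\sum_k A_k e^{i(x-k)^2/4t}/\sqrt t$ has been paired with $e^{-i(x-x_0)^2/4t}$ into the \emph{linear} phase $e^{i\beta_k x}$, $\beta_k=(x_0-k)/2t$, a step against that linear phase. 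For every integer $k$ different from the integer $k_*$ nearest to $x_0$ one has $|\beta_k|\gtrsim 1/t$, so those steps are harmless and the corresponding modes contribute $\le C\epsilon$; for $k=k_*$ the slope is exactly $|\beta_{k_*}|=|x_0-k_*|/(2t)=d_0$, which is nonzero precisely because $4\pi t\xi\notin\mathbb Z$. An integration by parts against $e^{i\beta_{k_*}x}$ is effective only on the scale $|x-x_0|\gtrsim 1/d_0$; estimating the near-resonant piece directly on $|x-x_0|\lesssim 1/d_0$ with the amplitude $\lesssim\langle x\rangle^{-1}$ produces, through the borderline integral $\int^{1/d_0}dx/x$, the factor $|\log d_0|$ when $d_0<1$, and only a constant when $d_0\ge1$. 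Summing the finitely many contributions and absorbing $j_\epsilon$, $1/t$ and $\langle x_0\rangle^{-1}\xi$ into $C\epsilon$ via the choice of $\xi(\epsilon,t,\{\alpha_j\})$ yields the two stated bounds.

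The main obstacle is exactly this last step: isolating the single near-resonant mode $k_*$ and showing that its slope $d_0$ costs only a logarithmic --- not a power-like --- loss, while retaining the $\epsilon$-gain that comes entirely from the $\langle x\rangle^{-1}$ decay of Lemma~\ref{lemmalimN}. Concretely, the integrations by parts have to be organised --- choosing at each step whether to use the quadratic phase $e^{-i(x-x_0)^2/4t}$ or a linear phase $e^{i\beta_k x}$, as in Lemmas~\ref{lemmalimT}--\ref{lemmalimN} --- so that no step ever leaves behind a logarithmically divergent integral of the non-decaying amplitude $uT$; this is why $u$ has to be kept in oscillatory form throughout and why $|u|$ and $|u_x|$ must never be bounded crudely near $x_0$.
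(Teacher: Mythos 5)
Your overall skeleton --- expand $\Re(\overline uN)$, complete the square, insert cutoffs, integrate by parts against the quadratic phase, expand $u$ into modes and pair into linear phases, then separate the near-resonant mode $k_*$ from the rest --- matches the paper's strategy, and your identification of the near-resonant slope $|\beta_{k_*}|=d(2\pi\xi,\tfrac{\mathbb Z}{2t})$ as the source of the logarithm is correct. However, there is a genuine gap in where the $\epsilon$-smallness comes from, and this gap is precisely at the near-resonant mode, which is the only place where a $|\log d_0|$ can appear and therefore the only place where the $\epsilon$-prefactor in the claimed bound $C\epsilon|\log d_0|$ actually matters.

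You attribute the entire $\epsilon$-gain to the decay $|g_N^{\pm}(t,x)|\le C\langle x\rangle^{-1}$ from Lemma~\ref{lemmalimN}, evaluated near the stationary point $x_0\approx 4\pi t\xi$. But this decay does not survive the first integration by parts: the derivative $(g_N^\pm)_x=-uT+i\frac{M}{\langle x\rangle}N^{\pm\infty}e^{-iM\log\frac{\langle x\rangle}{\sqrt t}}$ contains the term $-uT$, which is $O(1)$ pointwise and has no $\langle x\rangle^{-1}$ decay whatsoever. Thus when you reach the near-resonant piece $A_j\overline{A_{k_*}}\int \frac{e^{i\beta_{k_*}x}\,T(t,x)\,\chi}{x-x_0}dx$, the amplitude is $\frac{T(t,x)}{x-x_0}$ with $T=T^{\infty}+O(\langle x\rangle^{-1})$: the $T^{\infty}$ part carries no decay at all, and the direct estimate on $1<|x-x_0|<1/d_0$ gives a bare $|\log d_0|$, not $\epsilon|\log d_0|$. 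The $\epsilon$ must instead come from the coefficient $A_{k_*}(t)$: since $k_*$ is the integer nearest $x_0=j\pm4\pi t\xi$ and $\xi$ is large, one has $|k_*|\gg j_\epsilon$, so $|A_{k_*}(t)|\le\sum_{|r|\ge j_\epsilon}|A_r(t)|\le\epsilon$ by the choice \eqref{tailA}. Your proposal never invokes this $\ell^1$-tail smallness (nor the complementary observation that for $|r|<j_\epsilon$ the slope $\tfrac{j-r}{2t}+2\pi\xi\gtrsim\xi$ supplies $1/\xi\le\epsilon$), and without it the bound you obtain for the near-resonant mode is $C|\log d_0|$ rather than $C\epsilon|\log d_0|$. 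This is exactly the split into $|r|<j_\epsilon$ and $|r|\ge j_\epsilon$ carried out in the paper between the expressions $J_2^l$ and $J_3^l$, and it is an essential step, not a bookkeeping detail.
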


\begin{proof}
In virtue of Lemma \ref{lemmalimN}, $g_N^\pm$ are bounded functions with 
$$|g_N^\pm (t,x)|\leq\frac{C}{\langle x\rangle}, \forall x\in\mathbb R^{*\pm},$$
so $g_N^\pm (t,x)\eta^\pm(x)$ converge to zero at both $-\infty$ and $+\infty$. Therefore we can remove from
$$J^l(t,\xi)=\frac{e^{i4\pi^2t\xi^2}}{2\sqrt{t}}\sum_{\pm, |j|<j_\epsilon}e^{i2\pi j\xi}\,\overline{A_j(t)}\int_{-\infty}^\infty e^{-i\frac{(x-j-4\pi t\xi)^2}{4t}}g_N^\pm (t,x)\eta^\pm (x)dx$$
$$+\frac{e^{-i4\pi^2t\xi^2}}{2\sqrt{t}}\sum_{\pm, |j|<j_\epsilon}e^{i2\pi j\xi}\,A_j(t)\int_{-\infty}^\infty e^{i\frac{(x-j+4\pi t\xi)^2}{4t}}\overline{g_N^\pm (t,x)}\eta^\pm(x)dx,$$
bounded pieces of the integrals in $x$ located around $j\pm 4\pi t\xi$. Indeed on these parts, since $|j|\leq j_\epsilon$, we have convergence to zero as $\xi$ goes to infinity. Therefore there exists $\xi(\epsilon,t,\{\alpha_j\})$ such that for $\xi\geq  \xi(\epsilon,t,\{\alpha_j\})$ we have
$$|J^l(t,\xi)-J_1^l(t,\xi)|\leq \epsilon,$$
where
$$J_1^l(t,\xi)=\frac{e^{i4\pi^2t\xi^2}}{2\sqrt{t}}\sum_{\pm, |j|<j_\epsilon}e^{i2\pi j\xi}\,\overline{A_j(t)}\int_{-\infty}^\infty e^{-i\frac{(x-j-4\pi t\xi)^2}{4t}}g_N^\pm(t,x)\eta^\pm(x)\chi(x-j-4\pi t\xi)dx$$
$$+\frac{e^{-i4\pi^2t\xi^2}}{2\sqrt{t}}\sum_{\pm, |j|<j_\epsilon}e^{i2\pi j\xi}\,A_j(t)\int_{-\infty}^\infty e^{i\frac{(x-j+4\pi t\xi)^2}{4t}}\overline{g_N^\pm(t,x)}\eta^\pm(x)\chi(x-j+4\pi t\xi)dx,$$
and $\chi(s)$ is a smooth function vanishing on $\{x,|x|< \frac 12\}$, and valued $1$ on $\{x,|x|>1\}$. In particular the support of $\chi'$ is bounded. Now we integrate by parts using the quadratic phases. Again since  $g_N^\pm (t,x)\eta^\pm(x)$ converge to zero at both $-\infty$ and $+\infty$ there are no boundary terms and we get: 
$$J_1^l(t,\xi)=-i\sqrt{t}e^{i4\pi^2t\xi^2}\sum_{\pm, |j|< j_\epsilon}e^{i2\pi j\xi}\,\overline{A_j(t)}\int_{-\infty}^\infty e^{-i\frac{(x-j-4\pi t\xi)^2}{4t}}\left(\frac{g_N^\pm(t,x)\eta^\pm(x)\chi(x-j-4\pi t\xi)}{x-j-4\pi t\xi}\right)_xdx$$
$$+i\sqrt{t}e^{-i4\pi^2t\xi^2}\sum_{\pm, |j|< j_\epsilon}e^{i2\pi j\xi}\,A_j(t)\int_{-\infty}^\infty  e^{i\frac{(x-j+4\pi t\xi)^2}{4t}}\left(\frac{\overline{g_N^\pm(t,x)}\eta^\pm(x)\chi(x-j+4\pi t\xi)}{x-j+4\pi t\xi}\right)_x dx.$$
When the derivative falls on $\chi$ or on the denominator, we get again smallness by using the dominated convergence theorem. We are left with the terms involving $(g_N^\pm)_x=-uT+i\frac{M}{\langle x\rangle}N^{\pm \infty} e^{-iM\log\frac {\langle x\rangle}{\sqrt{t}}}$. Now we note that we can discard also the last term of $(g_N^\pm)_x(t,x)$ \\ as for instance \footnote{Indeed, we can use for large $a$ the fact that $\int e^{is^2}\frac{e^{iM\log\langle a+s\rangle}}{s\langle a+s\rangle}\eta^\pm(a+s)\chi(s)ds=O(\frac 1a)+\int e^{is^2}\left(\frac{e^{iM\log\langle a+s\rangle}}{s^2\langle a+s\rangle}\eta^\pm(a+s)\chi(s)\right)_sds$, and split the integral into regions $\frac 12 \leq |s|\leq \frac a2, \frac a2\leq |s|\leq 2a,2a\leq |s|$ to get a $\frac 1a$-bound.}
$$\left|\int_{-\infty}^\infty e^{\mp i\frac{(x-j\mp4\pi t\xi)^2}{4t}}\frac{e^{\mp iM\log\frac{\langle x\rangle}{\sqrt{t}}}}{(x-j\mp 4\pi t\xi)\langle x\rangle}\eta^+(x)\chi(x-j\mp 4\pi t\xi)dx\right|\leq \frac{C}{|j\pm4\pi t\xi|}\leq \epsilon,$$
for $\xi>0$ far away from the finite set $\{j,|j|<j_\epsilon\}$ and choosing $\xi(\epsilon,t,\{\alpha_j\})$ larger if needed. Therefore, we are left with estimating the terms of $J_1^l(t,\xi)$ involving the $-uT$ part of $(g_N^\pm)_x$: there exists $\xi(\epsilon,t,\{\alpha_j\})$ such that for $\xi\geq  \xi(\epsilon,t,\{\alpha_j\})$ we have
$$|J^l(t,\xi)-J_2^l(t,\xi)|\leq C\epsilon,$$
with
$$J_2^l(t,\xi)=i\sum_{\pm,|j|< j_\epsilon}\,\overline{A_j(t)} \int_{-\infty}^\infty \frac{\sum_r A_r(t)e^{i\frac{x(j-r+4\pi t\xi)}{2t}}e^{-i\frac{j^2-r^2}{4t}}}{x-j-4\pi t\xi}T(t,x)\eta^\pm(x)\chi(x-j-4\pi t\xi)dx$$
$$-i\sum_{\pm,|j|< j_\epsilon}\,A_j(t)\int_{-\infty}^\infty  \frac{\sum_r \overline{A_r(t)}e^{i\frac{x(r-j+4\pi t\xi)}{2t}}e^{i\frac{j^2-r^2}{4t}}}{x-j+4\pi t\xi}T(t,x)\eta^\pm(x)\chi(x-j+4\pi t\xi)dx.$$
Note that the summation $\sum_\pm$ and $\eta^\pm$ can be now removed as $\eta^++\eta^-=1$. 

We treat first the terms involving $|r|<j_\epsilon$. If needed we choose $\xi(\epsilon,t,\{\alpha_j\})$ larger such that for $|r|<j_\epsilon$ and $\xi\geq \xi(\epsilon,t,\{\alpha_j\})$ we have:
$$\frac 1{\pm(j-r)+4\pi t\xi}\leq \epsilon.$$
We perform in the corresponding integrals an integration by parts using the linear phase in $x$. Then, we get the $\epsilon$-smallness from the above constraint, and the integral that yields is uniformly bounded. Indeed, when the derivative falls either on $\chi,\eta^\pm$ or on the denominator $\frac 1{x-j\mp4\pi t\xi}$ we get immediately an uniform bound on the integral. When the derivative falls on $T(t,x)$ it generates a quadratic phase. Hence we can first remove a bounded piece of the integral centered where the phase vanishes, and then we can integrate by parts to get again a uniform bound for the integral. 

We are thus left with estimating the terms involving $|r|\geq j_\epsilon$, for which the linear phase might approach zero:  there exists $\xi(\epsilon,t,\{\alpha_j\})$ such that for $\xi\geq  \xi(\epsilon,t,\{\alpha_j\})$ we have
$$|J^l(t,\xi)-J_3^l(t,\xi)|\leq C\epsilon,$$
with
\begin{equation}\label{princl}
J_3^l(t,\xi)=i\sum_{|j|< j_\epsilon,|r|\geq j_\epsilon}\,\overline{A_j(t)}A_r(t)\, I^+(t,\xi,j,r)-i\sum_{|j|< j_\epsilon,|r|\geq j_\epsilon}\,A_j(t)\overline{A_r(t)}\, I^-(t,\xi,j,r),
\end{equation}
where
\begin{equation}\label{intosc}
I^\pm(t,\xi,j,r):=e^{\mp i\frac{j^2-r^2}{4t}}\int_{-\infty}^\infty \frac{e^{ix(\pm\frac{ j- r}{2t}+ 2\pi \xi)}}{x-j\mp 4\pi t\xi}\,T(t,x)\,\chi(x-j\mp 4\pi t\xi)\,dx.
\end{equation}

We first note that in view of \eqref{tailA} we have $\epsilon-$smallness of $\sum_{|r|\geq j_\epsilon}|A_r(t)|$. 
For $4\pi t\xi\notin\mathbb Z$ we can integrate by parts in $I^\pm(t,\xi,j,r)$ using the linear phase to get the  bound $\frac C{d(4\pi \xi,\frac{\mathbb Z}{t})}$. Therefore, we cannot control this way the $L^2(k,k+1)$ norm in $\xi$. To overcome this difficulty we shall prove that for $4\pi t\xi\notin\mathbb Z$:
\begin{equation}\label{boundintosc}|I^\pm(t,\xi,j,r)|\leq\left\{\begin{array}{c}C, \mbox{ if }|\pm\frac{j-r}{t}+4\pi \xi|\geq 1,\vspace{2mm}\\ C|\log (|\pm\frac{j-r}{t}+4\pi \xi|)|, \mbox{ if }|\pm\frac{j-r}{t}+4\pi \xi|<  1.\end{array} \right.\end{equation}
These bounds imply
$$|I^+(t,\xi,j,r)|+|I^-(t,\xi,j,r)|\leq\left\{\begin{array}{c}C, \mbox{ if }|\frac{j-r}{t}+4\pi \xi|\geq 1,|\frac{j-r}{t}+4\pi \xi|\geq 1,\vspace{2mm}\\ C |\log (|\frac{j-r}{t}+4\pi \xi|)|, \mbox{ if }|\frac{j-r}{t}+4\pi \xi|<  1,\vspace{2mm}\\ C |\log (|-\frac{j-r}{t}+4\pi \xi|)|, \mbox{ if }|-\frac{j-r}{t}+4\pi \xi|<  1.\end{array} \right.$$
Note that for $0<t<1$ the last two regions intersect if and only if $|2(j-r)|<2t<2$, that is when $r=j$ and in that case the bound is the same, $C\log(4\pi |\xi|)$. 
Then, by summing in $j$ and $r$, and by using \eqref{tailA}  we get for  $4\pi t\xi\notin\mathbb Z$ the bounds
    $$|J^l_3(t,\xi)|\leq\left\{\begin{array}{c}C\epsilon, \mbox{ if }d(4\pi \xi,\frac{\mathbb Z}{t})\geq 1,\vspace{2mm}\\ C\epsilon \,|\log (d(4\pi \xi,\frac{\mathbb Z}{t}))|, \mbox{ if }d(4\pi \xi,\frac{\mathbb Z}{t})<  1,\end{array} \right.$$
thus the lemma follows from \eqref{princl}.\\

We are thus left with proving \eqref{boundintosc}. We split the integral in $I^+(t,\xi,j,r)$ into the regions $x<0$ and $x>0$:
$$I^\pm(t,\xi,j,r)=e^{\mp i\frac{j^2-r^2}{4t}}\int_0^\infty \frac{e^{ix(\pm\frac{ j- r}{2t}+ 2\pi \xi)}}{x-j\mp 4\pi t\xi}\,T(t,x)\,\chi(x-j\mp 4\pi t\xi)\,dx$$
$$+e^{\mp i\frac{j^2-r^2}{4t}}\int_{-\infty}^0\frac{e^{ix(\pm\frac{ j- r}{2t}+ 2\pi \xi)}}{x-j\mp 4\pi t\xi}\,T(t,x)\,\chi(x-j\mp 4\pi t\xi)\,dx.$$
By using the convergence rate in Lemma \ref{lemmalimT}:
$$|(T(t,x)-T^\infty)\mathbb{I}_{(0,\infty)}(x)|+|(T(t,x)-T^{-\infty})\mathbb{I}_{(-\infty,0)}(x)|\leq \frac C{\langle x\rangle},\forall x\in\mathbb R,$$
and in view of the definition of $\chi$ we get
$$|I^\pm(t,\xi,j,r)-\tilde I^\pm(t,\xi,j,r)|\leq C,$$
where
\begin{equation}\label{intoscprinc}
\tilde I^\pm(t,\xi,j,r):=T^{\infty}e^{\mp i\frac{j^2-r^2}{4t}}\int_{x>0,|x-j\mp4\pi t\xi|>1} \frac{e^{ix(\pm\frac{ j- r}{2t}+ 2\pi \xi)}}{x-j\mp 4\pi t\xi}\,dx
\end{equation}
$$+T^{-\infty}e^{\mp i\frac{j^2-r^2}{4t}}\int_{x<0,|x-j\mp4\pi t\xi|>1}\frac{e^{ix(\pm\frac{ j- r}{2t}+ 2\pi \xi)}}{x-j\mp 4\pi t\xi}\,dx.$$

If $|\pm\frac{ j- r}{t}+ 4\pi \xi|\geq 1$ we perform an integration by parts using the linear phase and get the bound uniform in $\xi,j,$ and $r$ in \eqref{boundintosc}. 

If $|\pm\frac{ j- r}{t}+ 4\pi \xi|< 1$ we denote for simplicity $a=-j\mp 4\pi t\xi$ and $b=\pm\frac{ j- r}{2t}+ 2\pi \xi$. We change variables $x+a=y$, $yb=s$ to rewrite $\tilde I^\pm(t,\xi,j,r)$ as:
\begin{equation}\label{mainosc}
e^{\mp i\frac{j^2-r^2}{4t}}e^{-iab}\,(T^{\infty}\int_{\frac s{b}>a,\,|s|>|b|}\frac{e^{is}}{s}\,ds+T^{-\infty} \int_{\frac s{b}<a,\,|s|>|b|}\frac{e^{is}}{s}\,ds),
\end{equation}
On the region where $|s|>1$, due to the oscillatory phase we get a bound uniform in $\xi,j,$ and $r$. 
Finally, on the region where $|b|<|s|<1$, if such regions exist, the integration of $\frac{e^{is}}{s}$ yields a $\log(|b|)$ bound.  Therefore we have obtained \eqref{boundintosc} and the Lemma follows.\\

For further purposes we note that we have obtained for $|\pm\frac{ j- r}{t}+ 4\pi \xi|< 1$ the estimate
\begin{equation}\label{estlog}
|\tilde I^\pm(t,\xi,j,r)-e^{\mp i\frac{j^2-r^2}{4t}}e^{-i(-j\mp 4\pi t\xi)(\pm\frac{ j- r}{2t}+ 2\pi \xi)}\,(T^{\infty}-T^{-\infty})\end{equation}
$$\times \int_{s>(-j\mp 4\pi t\xi)(\pm\frac{ j- r}{2t}+ 2\pi \xi),\,1>|s|>|\pm\frac{ j- r}{2t}+ 2\pi \xi|}\frac{e^{is}}{s}\,ds|\leq C,$$
with $C$ an universal constant.\end{proof}

\begin{lemma}\label{lemma_high}
There exists $\xi(\epsilon,t,\{\alpha_j\})$ such that for $\xi\geq  \xi(\epsilon,t,\{\alpha_j\})$ and $4\pi t\xi\notin\mathbb Z$ we have the bounds
$$|J^h(t,\xi)|\leq\left\{\begin{array}{c}C\epsilon, \mbox{ if }d(4\pi \xi,\frac{\mathbb Z}{t})\geq 1,\vspace{2mm}\\ C\epsilon \,|\log (d(4\pi \xi,\frac{\mathbb Z}{t}))|, \mbox{ if }d(4\pi \xi,\frac{\mathbb Z}{t})<  1,\end{array} \right.$$
\end{lemma}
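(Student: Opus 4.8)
The plan is to reproduce, step by step, the chain of reductions carried out in the proof of Lemma \ref{lemma_low}, since $J^h(t,\xi)$ has exactly the same structure as $J^l(t,\xi)$ with the finite index set $\{|j|<j_\epsilon\}$ replaced by the infinite tail $\{|j|\geq j_\epsilon\}$. The crucial difference is that the ``choose $\xi$ large'' and dominated convergence devices used in Lemma \ref{lemma_low}, which relied on $\{|j|<j_\epsilon\}$ being finite, are no longer available; instead I would bound every intermediate oscillatory integral \emph{uniformly in $j$} (and in $\xi$ and in the auxiliary index $r$) and then sum against $|A_j(t)|$, using the tail smallness \eqref{tailA}, $\sum_{|j|\geq j_\epsilon}|A_j(t)|\leq\epsilon$, to absorb the summation into an $O(\epsilon)$ factor. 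Throughout I would use from Lemma \ref{lemmalimN} that $|g_N^\pm(t,x)|\leq C/\langle x\rangle$ and that $g_N^\pm(t,x)\eta^\pm(x)\to0$ as $x\to\pm\infty$.

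First I would insert in each oscillatory integral in $x$ a cutoff $\chi(x-j\mp4\pi t\xi)$ around the stationary point $x=j\pm4\pi t\xi$. On the bounded piece $|x-j\mp4\pi t\xi|<1$ the integral is at most $C\sup_{|x-j\mp4\pi t\xi|<1}|g_N^\pm(t,x)\eta^\pm(x)|\leq C$, and multiplying by $|A_j(t)|$ and summing over $|j|\geq j_\epsilon$ yields $\leq C\epsilon$; here one should note that for a fixed $\xi$ only $O(1)$ indices $j$ put the stationary point near the origin (where $g_N^\pm$ is merely bounded) and for those $|A_j(t)|\leq\epsilon$ anyway, while for the others one even gains a $\langle j\pm4\pi t\xi\rangle^{-1}$ factor that is not needed. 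On the complementary piece I would integrate by parts using the quadratic phase, with no boundary terms since $g_N^\pm\eta^\pm$ vanishes at $\pm\infty$. When the derivative hits $\chi$, the factor $(x-j\mp4\pi t\xi)^{-1}$, or $\eta^\pm$, the resulting integral is absolutely convergent and bounded uniformly in $j,\xi,r$, hence summable against $|A_j(t)|$ into $\leq C\epsilon$. When it hits $g_N^\pm$ I would use $(g_N^\pm)_x=-uT+i\frac{M}{\langle x\rangle}N^{\pm\infty}e^{-iM\log\frac{\langle x\rangle}{\sqrt t}}$; the $\langle x\rangle^{-1}$-decaying term gives, exactly as in the footnote of Lemma \ref{lemma_low}, a contribution bounded uniformly in $j,\xi$, again summable into $\leq C\epsilon$.

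This leaves, up to an error $\leq C\epsilon$, the principal term arising from the $-uT$ part of $(g_N^\pm)_x$. Expanding $u=\sum_r A_r(t)\frac{e^{i(x-r)^2/(4t)}}{\sqrt t}$ (respectively its conjugate), completing the squares and using $\eta^++\eta^-=1$, this principal term equals, modulo $\leq C\epsilon$,
$$i\sum_{|j|\geq j_\epsilon}\sum_r\overline{A_j(t)}A_r(t)\,I^+(t,\xi,j,r)-i\sum_{|j|\geq j_\epsilon}\sum_r A_j(t)\overline{A_r(t)}\,I^-(t,\xi,j,r),$$
with $I^\pm$ the integrals \eqref{intosc}, for which \eqref{boundintosc} is already established. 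If $d(4\pi\xi,\frac{\mathbb Z}{t})\geq1$, then $|\pm\frac{j-r}{t}+4\pi\xi|\geq d(4\pi\xi,\frac{\mathbb Z}{t})\geq1$ for all $j,r$ (since $\frac{j-r}{t}\in\frac{\mathbb Z}{t}$), so $|I^\pm|\leq C$ and the double sum is $\leq C\big(\sum_{|j|\geq j_\epsilon}|A_j(t)|\big)\|\{A_j(t)\}\|_{l^1}\leq C\epsilon$. If $d(4\pi\xi,\frac{\mathbb Z}{t})<1$, the pairs with $|\pm\frac{j-r}{t}+4\pi\xi|\geq1$ still contribute $\leq C\epsilon$, while for the pairs with $|\pm\frac{j-r}{t}+4\pi\xi|<1$ — a constraint which, since $0<t<1$, confines $j-r$ to a fixed set of at most two integers for each sign — I would use $|I^\pm|\leq C\,\big|\log|\pm\tfrac{j-r}{t}+4\pi\xi|\big|\leq C\,\big|\log d(4\pi\xi,\tfrac{\mathbb Z}{t})\big|$, the last step because $|\pm\frac{j-r}{t}+4\pi\xi|\geq d(4\pi\xi,\frac{\mathbb Z}{t})$ and $s\mapsto|\log s|$ is decreasing on $(0,1)$; summing over this finite set of values of $j-r$ and over $|j|\geq j_\epsilon$ gives $\leq C\epsilon\,|\log d(4\pi\xi,\frac{\mathbb Z}{t})|$. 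Collecting everything yields the two cases of the statement. The main obstacle, and the only genuine departure from the proof of Lemma \ref{lemma_low}, is precisely that the infinite summation over $|j|\geq j_\epsilon$ cannot be controlled by taking $\xi$ large, so every intermediate bound must be carried uniformly in $j$ for \eqref{tailA} to turn the $j$-sum into an $O(\epsilon)$ factor; and one must check that the one surviving logarithm, coming from the resonant set $|\pm\frac{j-r}{t}+4\pi\xi|<1$, is not amplified by the summation, which is exactly what $0<t<1$ guarantees by keeping the number of resonant differences $j-r$ finite.
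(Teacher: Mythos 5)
Your proposal is correct and follows essentially the same route as the paper, whose proof of this lemma is a single short paragraph: repeat the reductions of Lemma \ref{lemma_low} to arrive at \eqref{princh}, now drawing the factor $\epsilon$ from the tail bound \eqref{tailA} on $\sum_{|j|\geq j_\epsilon}|A_j(t)|$ rather than from ``$\xi$ large'' arguments, and then handle \eqref{princh} exactly as $J^l_3$. Your version usefully makes explicit what the paper leaves implicit: that every intermediate oscillatory integral must be bounded uniformly in $j,\xi,r$ so the $j$-sum can absorb the $\epsilon$, and that $0<t<1$ keeps the resonant set $|\pm\frac{j-r}{t}+4\pi\xi|<1$ down to $O(1)$ values of $j-r$ so the logarithm is not amplified.
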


\begin{proof}
Recall that
$$J^h(t,\xi)=\frac{e^{i4\pi^2t\xi^2}}{2\sqrt{t}}\sum_{\pm, |j|\geq j_\epsilon}e^{i2\pi j\xi}\,\overline{A_j(t)}\int_{-\infty}^\infty e^{-i\frac{(x-j-4\pi t\xi)^2}{4t}}g_N^\pm(t,x)\eta^\pm(x)dx$$
$$+\frac{e^{-i4\pi^2t\xi^2}}{2\sqrt{t}}\sum_{\pm, |j|\geq j_\epsilon}e^{i2\pi j\xi}\,A_j(t)\int_{-\infty}^\infty e^{i\frac{(x-j+4\pi t\xi)^2}{4t}}\overline{g_N^\pm(t,x)}\eta^\pm(x)dx.$$
 In this case we will get the $\epsilon$-decay from $A_j(t)$ thanks to \eqref{tailA}. 
 We can remove the same pieces of the integrals as in the proof of the Lemma \ref{lemma_low} to end with 
\begin{equation}\label{princh}
i\sum_{|j|\geq  j_\epsilon;r}\,\overline{A_j(t)}A_r(t)\, I^+(t,\xi,j,r)-i\sum_{|j|\geq  j_\epsilon;r}\,A_j(t)\overline{A_r(t)}\, I^-(t,\xi,j,r),
\end{equation}
where $I^\pm(t,\xi,j,r)$ were defined in \eqref{intosc}. 
This can be handled the same way as was done for $J^l_3(t,\xi)$ in the proof of Lemma \ref{lemma_low}.
 \end{proof}
 

 \begin{lemma}\label{lemma_princ}
 For any $\xi\in\mathbb R$ we have:
 $$|I(t,\xi)|\leq C\|\{A_j(t)\}\|_{l^1},$$
with $C$ an universal constant. Moreover, there exists $\xi(\epsilon,t,\{\alpha_j\})$ such that for $k\geq \xi(\epsilon,t,\{\alpha_j\})$
 $$\left|\int_k^{k+1}|I(t,\xi)|^2d\xi-4\pi \sum_j |\alpha_j|^2\right|\leq C\,\epsilon.$$
 \end{lemma}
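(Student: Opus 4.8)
The plan is to compute $I(t,\xi)$ asymptotically as $\xi\to\infty$ by stationary phase, and then to read off $\int_k^{k+1}|I(t,\xi)|^2\,d\xi$ from Plancherel's theorem and the mass conservation \eqref{consl2}. Completing the squares exactly as in the treatment of $J$, $I(t,\xi)$ equals
$$\frac{e^{i4\pi^2t\xi^2}}{2\sqrt t}\sum_{\pm,j}e^{i2\pi j\xi}\overline{A_j(t)}\int_{-\infty}^\infty e^{-i\frac{(x-j-4\pi t\xi)^2}{4t}}N^{\pm\infty}e^{-iM\log\frac{\langle x\rangle}{\sqrt t}}\eta^\pm(x)\,dx$$
plus the complex-conjugate-type term (with $A_j(t)$, $e^{i\frac{(x-j+4\pi t\xi)^2}{4t}}$, $\overline{N^{\pm\infty}}$ and $e^{iM\log\frac{\langle x\rangle}{\sqrt t}}$). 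As $\xi\to+\infty$ the stationary point $x=j+4\pi t\xi$ of the first integral sits at $+\infty$, so the $\eta^-$ contribution is negligible (the Gaussian is localized where $\eta^-=0$), and on the $\eta^+$ piece one freezes the slowly varying factor $e^{-iM\log\frac{\langle x\rangle}{\sqrt t}}$ at its value $e^{-iM\log\frac{4\pi t\xi}{\sqrt t}}$ at the stationary point; the Fresnel integral produces $\sqrt{4\pi t}\,e^{-i\pi/4}$. Symmetrically the conjugate term localizes at $x=j-4\pi t\xi\to-\infty$, picking up $N^{-\infty}$, $e^{iM\log\frac{4\pi t\xi}{\sqrt t}}$ and $\sqrt{4\pi t}\,e^{i\pi/4}$. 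The same computation, now estimating the Fresnel-type integrals directly (which are $O(\sqrt t)$ uniformly in $\xi$, by one integration by parts in the quadratic phase), yields $|I(t,\xi)|\le C\|\{A_j(t)\}\|_{l^1}$ with $C$ absolute.

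Splitting the $j$-sum at $|j|=j_\epsilon$, discarding the tail via \eqref{tailA} and choosing $\xi\ge\xi(\epsilon,t,\{\alpha_j\})$ large enough that the stationary-phase remainders over the finitely many remaining $j$'s are $\le\epsilon$, one gets
$$I(t,\xi)=\sqrt\pi\,e^{i\psi(\xi)}N^{+\infty}P(\xi)+\sqrt\pi\,e^{-i\psi(\xi)}\overline{N^{-\infty}}Q(\xi)+O(\epsilon),$$
where $\psi(\xi):=4\pi^2t\xi^2-\tfrac\pi4-M\log\tfrac{4\pi t\xi}{\sqrt t}$, $P(\xi):=\sum_j\overline{A_j(t)}e^{i2\pi j\xi}$, $Q(\xi):=\sum_j A_j(t)e^{i2\pi j\xi}$. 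Since $N^{\pm\infty}\in\mathbb S^2+i\mathbb S^2$ we have $|N^{\pm\infty}|^2=2$, hence, using the $l^1$-bound of the previous paragraph to absorb the cross term between $O(\epsilon)$ and the main part,
$$|I(t,\xi)|^2=2\pi|P(\xi)|^2+2\pi|Q(\xi)|^2+2\pi\,\Re\big(e^{2i\psi(\xi)}(N^{+\infty}\cdot N^{-\infty})P(\xi)\overline{Q(\xi)}\big)+O(\epsilon).$$
The functions $|P|^2$ and $|Q|^2$ are $1$-periodic, so by Plancherel's theorem and \eqref{consl2}, $\int_k^{k+1}\big(2\pi|P|^2+2\pi|Q|^2\big)\,d\xi=4\pi\sum_j|A_j(t)|^2=4\pi\sum_j|\alpha_j|^2$.

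It remains to show that the cross term $\int_k^{k+1}e^{2i\psi(\xi)}P(\xi)\overline{Q(\xi)}\,d\xi\to0$ as $k\to\infty$; this is exactly where the quadratic phase $8\pi^2t\xi^2$ in $2\psi$ is used. Write $P\overline Q=\sum_m c_m e^{i2\pi m\xi}$ with $\sum_m|c_m|\le\|\{A_j(t)\}\|_{l^1}^2<\infty$, and set $\Phi_m(\xi):=2\psi(\xi)+2\pi m\xi$, so that $\Phi_m''(\xi)=16\pi^2t+2M/\xi^2$ is bounded on $(k,k+1)$. For $|m|\le tk/2$ one has $|\Phi_m'(\xi)|=|16\pi^2t\xi-2M/\xi+2\pi m|\ge c\,tk$ on $(k,k+1)$ for $k$ large, with $\Phi_m'$ monotone of constant sign, so one integration by parts gives $|\int_k^{k+1}e^{i\Phi_m}\,d\xi|\le C/(tk)$, and these modes contribute $\le C\|\{A_j(t)\}\|_{l^1}^2/(tk)$. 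For $|m|>tk/2$ the trivial bound $|\int_k^{k+1}e^{i\Phi_m}\,d\xi|\le1$ gives a contribution $\le\sum_{|m|>tk/2}|c_m|$. Both terms tend to $0$; hence for $k\ge\xi(\epsilon,t,\{\alpha_j\})$ the cross term is $\le\epsilon$, and combining with the previous paragraph, $\big|\int_k^{k+1}|I(t,\xi)|^2\,d\xi-4\pi\sum_j|\alpha_j|^2\big|\le C\epsilon$.

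The main obstacle is the stationary-phase identification of $I$: extracting the principal part with the exact Fresnel constants $\sqrt{4\pi t}\,e^{\mp i\pi/4}$, correctly discarding the "wrong-sign" cutoff $\eta^{\mp}$ at each spatial infinity, and freezing the slowly varying factor $e^{\mp iM\log\frac{\langle x\rangle}{\sqrt t}}$ at the stationary point with an error that is uniform enough in $\xi$ large to reach the $O(\epsilon)$ remainder. Once $I$ is in the displayed form, the split of $|I|^2$ into a $1$-periodic part — handled by Plancherel and mass conservation — plus a quadratically oscillating cross term makes the rest elementary.
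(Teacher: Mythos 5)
Your proof is correct and follows essentially the same strategy as the paper: stationary-phase (Fresnel) extraction of the $\eta^\pm$-localized principal terms carrying $N^{+\infty}$ and $\overline{N^{-\infty}}$, Plancherel together with the mass conservation \eqref{consl2} for the main contribution, and integration by parts using the quadratic phase $8\pi^2 t\xi^2$ to dispose of the $P\overline Q$ cross term. Freezing the slowly varying logarithmic factor at $4\pi t\xi$ (making Plancherel exact) and the explicit Fourier-mode split $P\overline Q=\sum_m c_m e^{i2\pi m\xi}$ in the cross-term estimate are minor technical refinements of the paper's argument rather than a different route.
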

 \begin{proof}
We start by performing some change of variables in the expression of $I(t,\xi)$:
$$I(t,\xi)=\frac 12\sum_\pm \int_{-\infty}^\infty e^{i2\pi x\xi}\,\sum_j\overline{A_j(t)}\frac{e^{-i\frac{(x-j)^2}{4t}}}{\sqrt{t}} N^{\pm \infty}  e^{-iM\log\frac  {\langle x\rangle}{\sqrt{t}}}\eta^\pm (x) dx$$
$$+\frac 12\sum_\pm\int_{-\infty}^\infty e^{i2\pi x\xi}\,\sum_jA_j(t)\frac{e^{i\frac{(x-j)^2}{4t}}}{\sqrt{t}} \overline{N^{\pm \infty}}  e^{iM\log\frac  {\langle x\rangle}{\sqrt{t}}}\eta^\pm (x) dx$$
$$=N^{\pm \infty}\frac{e^{i4\pi^2t\xi^2}}{2\sqrt{t}}\sum_{\pm, j}e^{i2\pi j\xi}\,\overline{A_j(t)}\int_{-\infty}^\infty e^{-i\frac{(x-j-4\pi t\xi)^2}{4t}}e^{-iM\log\frac  {\langle x\rangle}{\sqrt{t}}}\eta^\pm (x) dx$$
$$+ \overline{N^{\pm \infty}} \frac{e^{-i4\pi^2t\xi^2}}{2\sqrt{t}}\sum_{\pm,j}e^{i2\pi j\xi}\,A_j(t)\int_{-\infty}^\infty e^{i\frac{(x-j+4\pi t\xi)^2}{4t}}e^{iM\log\frac  {\langle x\rangle}{\sqrt{t}}}\eta^\pm (x) dx.$$
$$=N^{\pm \infty} e^{i4\pi^2t\xi^2}\sum_{\pm,j}e^{i2\pi j\xi}\,\overline{A_j(t)}\int_{-\infty}^\infty e^{-iy^2}e^{-iM\log\frac  {\langle 2\sqrt{t}y+j+4\pi t\xi\rangle}{\sqrt{t}}}\eta^\pm (2\sqrt{t}y+j+4\pi t\xi) dy$$
$$+ \overline{N^{\pm \infty}} e^{-i4\pi^2t\xi^2}\sum_{\pm,j}e^{i2\pi j\xi}\,A_j(t)\int_{-\infty}^\infty e^{iy^2}e^{iM\log\frac  {\langle 2\sqrt{t}y+j-4\pi t\xi\rangle}{\sqrt{t}}}\eta^\pm (2\sqrt{t}y+j-4\pi t\xi) dy.$$
We first note that the integrals are uniformly bounded in $j$ and $\xi$: the contribution of the bounded region $|y|<1$ is bounded as the integrant is of modulus less than one, while the contribution of the region $|y|>1$ is bounded by doing integrations by parts using the quadratic phase.  Therefore, we get the first bound of the Lemma. 

To estimate $\int_k^{k+1}|I(t,\xi)|^2d\xi$ we shall split $I(t,\xi)$ into a function of size of order $\epsilon$ and a function of $L^2(k,k+1)$-norm equal to $4\pi \sum_j |\alpha_j|^2$.
In view of the definition \eqref{tailA} of $j_\epsilon$, the terms in $I(t,\xi)$ involving $|j|> j_\epsilon$ can be upper-bounded by $C\epsilon$. We are left with the terms involving $|j|\leq j_\epsilon$. 
Observe that\footnote{Indeed, the integral $\int_{-\infty}^\infty e^{iy^2}(e^{iM\log\langle 2y+a\rangle}-e^{iM\log \langle a\rangle})\eta^\pm(2\sqrt{t}y+\sqrt{t}a)dy$ can be upper-bounded by $\frac C{|a|}$ on $|s|<1$, while on $|s|>1$ by performing integration by parts from the quadratic phase and by using the dominated convergence theorem we get decay to zero as $|a|\rightarrow\infty$. Then by the same type of arguments we have $\underset{a\rightarrow\pm\infty}{\lim}\left( \int_{-\infty}^\infty e^{iy^2}\eta^\pm(2\sqrt{t}y+\sqrt{t}a)dy-\int_{-\infty}^\infty e^{iy^2}dy\right)=0$ and $\underset{a\rightarrow\pm\infty}{\lim}\int_{-\infty}^\infty e^{iy^2}\eta^\mp(2\sqrt{t}y+\sqrt{t}a)dy=0$.
}

$$\underset{a\rightarrow\pm\infty}{\lim}\left( \int_{-\infty}^\infty e^{iy^2}e^{iM\log\langle 2y+a\rangle}\eta^\pm(2\sqrt{t}y+\sqrt{t}a)dy-e^{iM\log \langle a\rangle}\int_{-\infty}^\infty e^{iy^2}dy\right)=0,$$
and
$$\underset{a\rightarrow\pm\infty}{\lim} \int_{-\infty}^\infty e^{iy^2}e^{iM\log\langle 2y+a\rangle}\eta^\mp(2\sqrt{t}y+\sqrt{t}a)dy=0.$$
Hence, choosing $\xi(\epsilon,t,\{\alpha_j\})$ larger if needed, for $|j|\leq j_\epsilon$ and $\xi\geq \xi(\epsilon,t,\{\alpha_j\})$ we get:
$$\left|\int_{-\infty}^\infty e^{\mp iy^2}e^{\mp iM\log\frac  {\langle 2\sqrt{t}y+j\pm 4\pi t\xi\rangle}{\sqrt{t}}}\eta^\pm(2\sqrt{t}y+j\pm 4\pi t\xi)ds-e^{\mp iM\log\frac{\langle j\pm 4\pi t\xi\rangle}{\sqrt{t}}}\int_{-\infty}^\infty e^{\mp is^2}ds\right|\leq\epsilon,$$
and
$$\left|\int_{-\infty}^\infty e^{\mp iy^2}e^{\mp iM\log\frac  {\langle 2\sqrt{t}y+j\pm 4\pi t\xi\rangle}{\sqrt{t}}}\eta^\mp(2\sqrt{t}y+j\pm 4\pi t\xi)ds\right|\leq\epsilon.$$
Therefore we have for $\xi\geq k_\epsilon$
\begin{equation}\label{decI1}
|I(t,\xi)-I^1(t,\xi)-I^2(t,\xi)|\leq C\,\epsilon,
\end{equation}
where 
$$I^1(t,\xi)=N^\infty e^{i4\pi^2t\xi^2}\sqrt{\pi}e^{i\frac{\pi}4}\sum_{|j|\leq j_\epsilon}e^{i2\pi j\xi}\,e^{-iM\log\frac{\langle j+ 4\pi t\xi\rangle}{\sqrt{t}}}\,\overline{A_j(t)} ,$$
and 
$$I^2(t,\xi)=\overline{N^{-\infty}} e^{-i4\pi^2t\xi^2}\sqrt{\pi}e^{-i\frac{\pi}4}\sum_{|j|\leq j_\epsilon}e^{i2\pi j\xi}\,e^{iM\log\frac{\langle j- 4\pi t\xi\rangle}{\sqrt{t}}}\,A_j(t).$$
Since $I_1^1(t,\xi)$ and $I_1^2(t,\xi)$ are uniformly bounded by $2\sqrt{\pi}\sum_j|A_j(t)|$, we obtain from \eqref{decI1}
$$\left|\int_k^{k+1}|I(t,\xi)|^2d\xi-\int_k^{k+1}|I^1(t,\xi)|^2d\xi-\int_k^{k+1}|I^2(t,\xi)|^2d\xi-\int_k^{k+1}I^1(t,\xi)\overline{I^2(t,\xi)}d\xi\right|\leq C\,\epsilon.$$
Then, as $|N^{\pm\infty}|=2$, Plancherel's formula gives us for $k\geq k_\epsilon$
$$\left|\int_k^{k+1}|I(t,\xi)|^2d\xi-4\pi \sum_{|j|\leq j_\epsilon}  |A_j(t)|^2-\int_k^{k+1}I^1(t,\xi)\overline{I^2(t,\xi)}d\xi\right|\leq C\, \epsilon.$$
Now we see that the crossed terms are
$$\int_k^{k+1}I^1(t,\xi)\overline{I^2(t,\xi)}d\xi=N^\infty.N^{-\infty} \,\pi e^{i\frac\pi 2}\sum_{|j_1|,|j_2|\leq j_\epsilon}\overline{A_{j_1}(t)}\overline{A_{j_2}(t)}\times$$
$$\times \int_k^{k+1}e^{i8\pi^2t\xi^2}e^{i2\pi (j_1-j_2)\xi}e^{-iM\log\frac{\langle j+ 4\pi t\xi\rangle}{\sqrt{t}}}e^{-iM\log\frac{\langle j- 4\pi t\xi\rangle}{\sqrt{t}}}d\xi.$$
One single integration by parts using the quadratic phase in $\xi$ gives us decay in $k$, so choosing $\xi(\epsilon,t,\{\alpha_j\})$ larger if needed we obtain for $k\geq \xi(\epsilon,t,\{\alpha_j\})$
$$\left|\int_k^{k+1}|I(t,\xi)|^2d\xi-4\pi \sum_{|j|\leq j_\epsilon}  |A_j(t)|^2\right|\leq C\,\epsilon.$$
Recalling the choice \eqref{tailA} of $j_\epsilon$ and the conservation law \eqref{consl2} we get for $k\geq \xi(\epsilon,t,\{\alpha_j\})$
$$ \left|\int_k^{k+1}|I(t,\xi)|^2d\xi-4\pi \sum_{j}  |\alpha_j|^2\right|\leq C\,\epsilon.$$
\end{proof}

Summarizing we have decomposed
$$\widehat{T_x}(t,\xi)=:I(t,\xi)+J(t,\xi),$$
and proved in Lemmas \ref{lemma_low}-\ref{lemma_high}-\ref{lemma_princ} that there exists $\xi(\epsilon,t,\{\alpha_j\})\in\mathbb R$ such that for $\xi\geq  \xi(\epsilon,t,\{\alpha_j\})$ and $4\pi t\xi\notin\mathbb Z$ we have the bounds: 
$$|J(t,\xi)|\leq\left\{\begin{array}{c}C\epsilon, \mbox{ if }d(4\pi \xi,\frac{\mathbb Z}{t})\geq 1,\vspace{2mm}\\ C\epsilon\, |\log (d(4\pi \xi,\frac{\mathbb Z}{t}))|, \mbox{ if }d(4\pi \xi,\frac{\mathbb Z}{t})<  1;\end{array} \right.$$
$$|I(t,\xi)|\leq C,$$
and for all $k\geq \xi(\epsilon,t,\{\alpha_j\})$:
 $$\left|\int_k^{k+1}|I(t,\xi)|^2d\xi-4\pi \sum_j |\alpha_j|^2\right|\leq C\,\epsilon.$$
 We note that for $\xi$ in an interval of size one, there are only a finite number of possible locations where $d(4\pi\xi,\frac{\mathbb Z}{t})<  1$, depending only on $t$, and on these regions $J(t,\xi)$ is square integrable. 
Therefore
$$\left|\int_k^{k+1}|\widehat{T_x}(t,\xi)|^2d\xi-4\pi \sum_j |\alpha_j|^2\right|\leq C\,\epsilon, \forall k\geq \xi(\epsilon,t,\{\alpha_j\}).$$

The value of $0<\epsilon<1$ was arbitrary, the constant $C$ is independent of $\epsilon$, so we obtain the conservation law \eqref{cons}, and the proof of Theorem \ref{th} is complete.\\

\subsection{The result on the normal vectors}

In this subsection we obtain the results \eqref{consN} and \eqref{energyN0} from Remark \ref{Nrem}. We recall from Lemmas 4.5-4.7 in \cite{BV5} that we have a limit $\tilde N(0)$ at $t=0$ of
$$\tilde N(t,x)=e^{i\sum_{r\neq x}|\alpha_r|^2\log\frac{|x-r|}{\sqrt{t}}}N(t,x),$$
that is piecewise constant 
$$\tilde N(0,x)=\tilde N(0,x'), \forall x,x'\in(j,j+1),\forall j\in\mathbb Z,$$
and
$$\tilde N(0,j^\pm)=e^{i\sum_{r\neq j}|\alpha_r|^2\log|r-j|}e^{i Arg(\alpha_j)}\Theta_j(B^\pm_{|\alpha_j|}).$$
Here $B^\pm_{|\alpha_j|}\in\mathbb S^2+i\mathbb S^2$ are defined in \cite{GRV} in terms of the asymptotics at $\pm\infty$ of the normal vectors of the self-similar solution $\chi_{|\alpha_j|}$. 
It follows that at $t=0$ we have
$$\tilde N_x(0)=\sum_j(\tilde N(0,j^+)-\tilde N(0,j^-))\delta_j=\sum_j e^{i\sum_{r\neq j}|\alpha_r|^2\log|r-j|}e^{i Arg(\alpha_j)}\Theta_j(B_{|\alpha_j|}^+-B_{|\alpha_j|}^-)\delta_j,$$
so
$$\widehat{\tilde N_x}(0,\xi)=\sum_j e^{i\sum_{r\neq j}|\alpha_r|^2\log|r-j|}e^{i Arg(\alpha_j)}\Theta_j(B_{|\alpha_j|}^+-B_{|\alpha_j|}^-)e^{i2\pi j}.$$
As $\widehat{\tilde N_x}(0,\xi)$ is periodic in $\xi$, we get by Plancherel's theorem that for any $k$
$$\int_k^{k+1}|\widehat{\tilde N_x}(0,\xi)|^2d\xi=\sum_j |\Theta_j(B_{|\alpha_j|}^+-B_{|\alpha_j|}^-)|^2=\sum_j |B_{|\alpha_j|}^+-B_{|\alpha_j|}^-|^2.$$
Therefore, as we know from \cite{GRV} that 
$$ |B_{|\alpha_j|}^+-B_{|\alpha_j|}^-|^2=4|B_{|\alpha_j|,1}^+|^2=4(1-(A_{|\alpha_j|,1}^+)^2)=4(1-e^{-\pi |\alpha_j|^2}),$$ 
we obtain \eqref{energyN0}.\\

For $t>0$ we fix $\epsilon\in(0,1)$. 
We split:
$$\widehat{N_x}(t,\xi)=-\sum_\pm\int_{-\infty}^\infty e^{i2\pi x\xi}\,u(t,x) (T^{\pm\infty}+(T(t,x)-T^{\pm\infty}))\eta^\pm(x)dx$$
$$=-\int_{-\infty}^\infty e^{i2\pi x\xi}\,\sum_{\pm,j}A_j(t)\frac{e^{i\frac{(x-j)^2}{4t}}}{\sqrt{t}} (T^{\pm\infty}+(T(t,x)-T^{\pm\infty}))\eta^\pm(x)dx=:\tilde I(t,\xi)+\tilde J(t,\xi).$$
Proceeding as above for $J(t,\xi)$ we get the existence of $\xi(\epsilon,t,\{\alpha_j\})$ such that
$$\tilde J(t,\xi)=-\int_{-\infty}^\infty e^{i2\pi x\xi}\,\sum_{\pm,j}A_j(t)\frac{e^{i\frac{(x-j)^2}{4t}}}{\sqrt{t}} g_T^\pm(t,x)\eta^\pm(x)dx$$
$$=-e^{-i4\pi^2t\xi^2}\sum_{\pm,j}e^{i2\pi j\xi}\,A_j(t) \int_{-\infty}^\infty e^{i\frac{(x-j+4\pi t\xi)^2}{4t}}g_T^\pm(t,x)\eta^\pm(x)dx$$
satisfies, for $\xi\geq  \xi(\epsilon,t,\{\alpha_j\})$ and $4\pi t\xi\notin\mathbb Z$,
$$|\tilde J(t,\xi)|\leq\left\{\begin{array}{c}C\epsilon, \mbox{ if }d(4\pi \xi,\frac{\mathbb Z}{t})\geq 1,\vspace{2mm}\\ C\epsilon\, |\log (d(4\pi \xi,\frac{\mathbb Z}{t}))|, \mbox{ if }d(4\pi \xi,\frac{\mathbb Z}{t})<  1.\end{array} \right.$$
For $\tilde I(t,\xi)$ we make the changes of variable $x=j+2\sqrt{t}y$ and $s=y-2\pi\sqrt{t}\xi$:
$$\tilde I(t,\xi)=-2\sum_{\pm,j}T^{\pm\infty} e^{i2\pi j\xi}\,e^{-i|\alpha_j|^2\log \sqrt{t}}A_j(t)e^{-i\frac{j^2}{4t}}\int_{-\infty}^\infty e^{iy^2-i4\pi \sqrt{t}\xi y}\eta^\pm(j+2\sqrt{t}y)dy$$
$$=-2\sum_{\pm,j}T^{\pm\infty} e^{i2\pi j\xi}\,A_j(t) e^{-i\frac{j^2}{4t}}e^{-i4\pi^2t\xi^2}\int_{-\infty}^\infty e^{is^2}\eta^\pm(j+4\pi t\xi+2\sqrt{t}s)ds.$$
Since for $|j|>j_\epsilon$ we get $\epsilon-$smallness from the $A_j$'s, and in view of the definition of $\eta^\pm$, we have
$$|\tilde I(t,\xi)+2T^\infty e^{-i4\pi^2t\xi^2}\sum_je^{i2\pi j\xi}\,A_j(t) e^{-i\frac{j^2}{4t}}\sqrt\pi e^{i\frac\pi 4}|\leq C\epsilon.$$
In particular, we note that all the terms are uniformly bounded, so that by Plancherel's theorem we have
$$\left|\int_k^{k+1}|\tilde I(t,\xi)|^2d\xi-4\pi \sum_j |A_j(t) |^2\right|\leq C\epsilon.$$
Therefore, as in the case of the tangent vector $T$ we get that for $k\geq \xi(\epsilon,t,\{\alpha_j\})$
$$|\int_k^{k+1}|\widehat{N_x}(t,\xi)|^2d\xi-4\pi \sum_j |A_j(t) |^2|\leq C\epsilon.$$
As $\epsilon\in(0,1)$ was arbitrary we get \eqref{consN} by the conservation of mass \eqref{consl2}.

\begin{remark}
In view of the estimates we have obtained on the $J(t,\xi)$, it is natural to look for a logarithmic growth of $\hat T_x(t,\xi)$ in terms of the distance $d(4\pi \xi,\frac{\mathbb Z}{t})$. Moreover, the numerical computations given in \cite{DHV2} suggest the unboundedness of $\|\widehat T_x\|_\infty$ in the case of a regular planar polygon as initial data.\par
By doing similar computations to the ones in this section, and by using in particular \eqref{estlog}, we obtain for values of $\xi$ such that there exists $n\in\mathbb N$, $d\in(0,1)$ satisfying 
$$4\pi\xi=\frac nt+d,$$ 
the estimate:
\begin{equation}\label{T^nearZ/tbis}
\left|\hat{T_x}(t,\xi)-i\sum_{j}\,\overline{A_j(t)}A_{j+n}(t)\,e^{-i\frac{j^2-(j+n)^2}{4t}}\,(T^{\infty}-T^{-\infty})\right.
\end{equation}
$$\left.\times e^{ij\frac d2}\left(e^{in\frac d2}\int_{s>(-j- n)\frac d2,\,1>|s|>\frac d2}\frac{e^{is}}{s}\,ds-\int_{s>-j\frac d2,\,1>|s|>\frac d2}\frac{e^{is}}{s}\,ds\right)\right|\leq K(t,\{\alpha_j\}).$$

For instance, in the case of initial data $\alpha_0^n=\alpha_n^n =\delta$ and $\alpha_j^n=0$ for $j\notin\{0,n\}$, that corresponds to a polygonal line with two corners separated by a distance of size $n$, in \eqref{T^nearZ/tbis} the sum reduces to the case $j=0$, and we get:
\begin{equation}\label{T^nearZ/t42corners}
\left|\hat{T_x}(t,\xi)-i\overline{A_0(t)}A_n(t)\, e^{i\frac{n^2}{4t}}(T^{\infty}-T^{-\infty})\,\left(e^{in\frac d2}-1\right)e^{i\frac d2}\log \frac d2\right|\leq K(t,\{\alpha_j^n\}).
\end{equation}
For $d\ll \frac 1n$ the factor $e^{in\frac d2}-1$ ruins the $\log d$ growth.  Instead, for $d\approx \frac 1n$ we could look for a $\log n$ growth. Unfortunately, the results we have at hand about the IVP of \eqref{NLS} are not good enough, and we get a  constant $K(t,\{\alpha_j^n\})$ in $n$ that grows faster than $\log n$. On the other hand it seems rather natural to be able to solve \eqref{NLS} and the corresponding equation \eqref{BF} just under the condition that $\sum_j |\alpha_j|^2$ is finite. This question will be studied elsewhere.

\end{remark}

\section{An observation about the dynamics of a regular polygon}\label{sectanglesreg}
In this section we give some evidence that supports the conjecture made in \cite{DHV1} about the evolution of a regular planar polygon according to the binormal flow.
 
As recalled in the Introduction, the case when the initial curve in \eqref{BF} is a broken line with just one corner of angle $\theta$ located at $x=0$ was considered in  \cite{GRV}. In that paper the Hasimoto transformation is still used, and a solution is found considering as initial condition for \eqref{NLS}  $\alpha\delta_0$, where
$$\sin\frac\theta 2=e^{-\pi\frac{\alpha^2}{2}},$$
$u_\alpha(t,x)=\alpha\frac{e^{i\frac{x^2}{4t}}}{\sqrt{t}}$, and $a(t)=\frac {\alpha^2}t$.
As a consequence, and except in the trivial situation of one straight line where $\theta=\pi$, the filament function of the initial curve $\chi_\alpha(0)$, i.e. $\theta\delta_0$,   is not the limit of the filament functions of $\chi_\alpha(t)$. Nevertheless, it was proved in \cite{BV4} that this solution is unique and the corresponding initial value problem is well posed in an appropriate sense.

Similarly, if $\chi(0)$ is a broken line with several corners of angles $\theta_j$ located at the integers $x=j$ it was proved in \cite{BV5} that one has to consider the sequence $\{\alpha_j\}$ with modulus defined by
$$\sin\frac{\theta_j} 2=e^{-\pi\frac{|\alpha_j|^2}{2}}.$$
The phases are determined in a more complicated way involving the curvature and torsion angles of $\chi(0)$. Nevertheless,  if $\chi(0)$ is a planar polygon $\{\alpha_j\}$ can be taken real. Then we construct a solution of \eqref{NLS} with $a(t)=\frac{\sum_j|\alpha_j|^2}{t}$, and datum at time zero given by $\sum_j\alpha_j\delta_j$.

It is then natural to expect that in the case of a planar regular polygon with $N$ sides as initial data of \eqref{BF} one has to consider as initial data for \eqref{NLS}
\begin{equation}\label{id}\sum_j \alpha\,\delta_{\frac jN}\end{equation} 
with $\alpha>0$ defined by
$$\sin(\frac\pi N)=e^{-\pi\frac{\alpha^2}{2}}.$$
By using the Galilean invariance and assuming uniqueness, it was shown in \cite{DHV1} that the corresponding solution of \eqref{NLS} has to be written as
$$\psi(t,x)=\hat{\psi}(t,0)\sum_j e^{it(2\pi Nj)^2+i(2\pi Nj)x}.$$
In view of \eqref{id} and the Poisson summation formula $\sum_je^{i(2\pi Nj)x}=\frac 1N\sum_j\delta_\frac jN$, we have
$$\hat{\psi}(t,0)=\alpha N,$$
which therefore does not depend on time.

So, on one hand we have a behavior of the linear evolution
$$\psi(t,x)=\sum_j \hat{\psi}(t,0) e^{it\Delta}\delta_\frac jN,$$
and we can think that the conservation law \eqref{consl2} also holds in the periodic setting\footnote{We recall that the sequence $\{A_j(t)\}_{j\in\mathbb N}$ was found by doing a fixed point argument on $\tilde A_k(t)$ for the equation (24) in \cite{BV5}: 
$$i\partial_t \tilde A_k(t)=f_k(t)-\frac 1{8\pi t}(|\tilde A_k(t)|^2-|\alpha_k|^2)\tilde A_k(t),$$
where
$$f_k(t)=\frac{1}{8\pi t}\sum_{(j_1,j_2,j_3)\in NR_k}e^{-i\frac{k^2-j_1^2+j_2^2-j_3^3}{4t}}e^{-i\frac{|\alpha_k|^2-|\alpha_{j_1}|^2+|\alpha_{j_2}|^2-|\alpha_{j_3}|^2}{4\pi}\log\sqrt{t}}\tilde A_{j_1}(t)\overline{\tilde A_{j_2}(t)}\tilde A_{j_3}(t),$$
with initial data $\tilde A_k(0)=\alpha_k$. 
In particular we remark that for $N\in\mathbb N$, $\{B_j(t)\}_{j\in\mathbb R}$ with $B_j(t):=\tilde A_{N+j}(t)$ solves also the equation. Therefore if the initial data satisfies $\alpha_{k+N}=\alpha_k$ for all $k$, and there is uniqueness of the solution, then we conclude that $\tilde A_{k+N}(t)=\tilde A_k(t)$ for all $k$ and $t$, so the periodic setting is preserved.}. \\

\noindent
As a consequence we would get
$$N (\alpha N)^2=N|\hat{\psi}(t,0)|^2.$$
On the other hand, it was proved in \cite{DHV1} making use again of the Poisson summation formula, that for rational  times $t_{p,q}$ the Talbot effect holds: if $q$ is odd
 $$\psi(t_{p,q},x)=\frac{\hat{\psi}(t_{p,q},0)}{Nq} \sum_l\sum_{m=0}^{q-1} G(p,q,m)\delta_{l+\frac m{Nq}}(x)=: \sum_l\sum_{m=0}^{q-1}\alpha_{l,m}\delta_{l+\frac m{Nq}}(x),$$
with
$$|\alpha_{l,m}|=\frac{|\hat{\psi}(t_{p,q},0)|}{N\sqrt{q}}.$$
Then
$$|\alpha_{l,m}|^2=\frac{|\hat{\psi}(t_{p,q},0)|^2}{N^2q},$$
so
$$e^{-\pi \frac{|\alpha_{l,m}|^2}{2}}=e^{-\pi\frac{|\hat{\psi}(t_{p,q},0)|^2}{2N^2q}}=(e^{-\pi\frac{\alpha^2}{2}})^\frac 1q,$$
therefore the angles $\theta_{p,q}$ of the skew polygon at time $t_{p,q}$ satisfy
$$\sin(\frac{\theta_{p,q}}2)=\sin(\frac\pi N)^\frac 1q,$$
that is precisely the value given in \cite{DHV1} and obtained from the numerical data. Similarly one can repeat the argument if $q$ is even.
\bigskip

{\bf{Acknowledgements:}}  This research is partially supported by the Institut Universitaire de France, by the French ANR project SingFlows, by ERCEA Advanced Grant 2014 669689 - HADE, by MEIC (Spain) projects Severo Ochoa  SEV-2017-0718, and PGC2018-1228 094522-B-I00, and by  Eusko Jaurlaritza project  IT1247-19 and BERC program.

\end{document}